\newtheorem{theorem}{Theorem}
\newtheorem{lemma}[theorem]{Lemma}
\newtheorem{thm}[theorem]{Theorem}
\newcommand{\e}{{\epsilon}}
\newcommand{\cS}{\mathcal{S}}
\begin{document}

\title{Twins in graphs}
\author{Maria Axenovich}
\address{Department of Mathematics,  Karlsruhe Institute of Technology, Karlsruhe 76128, Germany}
\email{maria.aksenovich@kit.edu}
\thanks{This author's research  was partially supported by NSF grant DMS-0901008.}
\author{Ryan Martin}
\address{Department of Mathematics, Iowa State University, Ames, Iowa 50011}
\email{rymartin@iastate.edu}
\thanks{This author's research partially supported by NSF grant DMS-0901008, NSA grant H98230-13-1-0226 and by an Iowa State University Faculty Professional Development grant.}
\author{Torsten Ueckerdt}
\address{Department of Mathematics, Karlsruhe Institute of Technology, Karlsruhe 76128, Germany}
\email{torsten.ueckerdt@kit.edu}

\subjclass[2010]{Primary 05C35; Secondary 05C80}
\keywords{twins, induced subgraphs, same order and size, similar subgraph}

\maketitle

\begin{abstract}
 A basic pigeonhole principle insures an existence of two objects of the same type
 if the number of objects is larger than the number of types. Can such a principle be extended to
 a more complex combinatorial structure?  Here, we address such a question for graphs.
We call two disjoint subsets $A, B$ of vertices   \emph{\textbf{twins}} if
 they have the same cardinality and induce subgraphs of the same size.
 Let $t(G)$ be the largest $k$ such that $G$ has twins on $k$ vertices each.
We provide the bounds on $t(G)$ in terms of the number of edges  and vertices using discrepancy results for  induced subgraphs.
In addition, we give conditions under which $t(G)= |V(G)|/2$ and show that if $G$ is a forest then $t(G) \geq |V(G)|/2 - 1$.
 \end{abstract}

\section{Introduction}
\label{sec:intro}

For a combinatorial structure $G$ and a set of parameters, we say that two disjoint substructures are \emph{twins} if those parameters coincide for each substructure.
This notion generalizes the pigeonhole principle and was investigated in case of sequences and graphs.  In the case of sequences it is known, see Axenovich, Person and Puzynina ~\cite{APP},  that any binary sequence
of length  $n$ contains two disjoint identical subsequences of length $n/2-o(n)$ each.
Twins are defined to be two disjoint  vertex subsets of the same size and the same multiset of pairwise distances were studied by   Albertson, Pach and Young~\cite{APY},  and by Axenovich and \"Ozkahya~\cite{AO}.
Edge-disjoint isomorphic subgraphs were also addressed in ~\cite{GR, LLS, AKS}.

Here, we concentrate on the following twin problem in graphs introduced by Caro and Yuster in~\cite{caroyuster}.
For a graph $G$, we call two disjoint subsets  of vertices \emph{twins} if  they have the same size and
induce subgraphs with the same number of edges.
Let $t(G)$ be the largest $k$ such that there are twins $A, B$ in $G$ with $|A|=|B|=k$.
Let $$t(n) = \min \{ t(G):  |V(G)|=n\}.$$ The best currently known bounds on $t(n)$ are given in the following theorem.

\begin{theorem}[Caro and Yuster~\cite{caroyuster}]\label{general-bounds}
There exists a positive constant $c$ such that $\sqrt{n} \leq t(n) \leq  n/2 - c\log\log n$.
\end{theorem}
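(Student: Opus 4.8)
\smallskip
\noindent\textbf{Proof idea.}
It is convenient to record first that the problem is self-complementary: if $A,B$ are disjoint with $|A|=|B|=k$, then $e(\overline{G}[A])-e(\overline{G}[B])=e(G[B])-e(G[A])$, so $A,B$ are twins in $G$ exactly when they are twins in $\overline{G}$; hence $t(G)=t(\overline{G})$ and we may assume $e(G)\le\binom{n}{2}/2$. Two regimes are then easy. If $G$ has an independent set $I$ with $|I|\ge 2k$, splitting $I$ into two halves gives twins of size $k$, so $t(G)\ge\lfloor\alpha(G)/2\rfloor$, and dually $t(G)\ge\lfloor\omega(G)/2\rfloor$; so we are only in difficulty when $G$ has neither a large independent set nor a large clique. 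The crude pigeonhole bound is obtained by partitioning $V(G)$ into $m=\lfloor n/k\rfloor$ pairwise disjoint $k$-sets: either two induce equally many edges (twins of size $k$), or these $m$ counts are distinct elements of $\{0,1,\dots,\binom{k}{2}\}$, forcing $m\le\binom{k}{2}+1$ and hence $t(n)\gtrsim(2n)^{1/3}$.

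To improve the exponent from $1/3$ to $1/2$, the plan is a two-stage ``bucket, then correct'' argument. First take about $n/k$ pairwise disjoint $k$-sets and split the range $\{0,\dots,\binom{k}{2}\}$ of possible edge counts into $O(k)$ intervals of length $O(k)$; by pigeonhole many $k$-sets fall into one interval, i.e.\ have pairwise edge differences only $O(k)$. Then use the remaining $\Theta(n)$ vertices as a reservoir and reduce such a difference to exactly $0$ by repeatedly swapping a single vertex between one of these $k$-sets and the reservoir. I expect this correction step to be the main obstacle: one swap changes the induced edge count by an amount that may be as large as $k-1$ in absolute value, so reaching a prescribed target is not automatic --- what is needed is a discrepancy-type statement ensuring that among the many admissible swaps there is always one whose effect has the right magnitude, essentially an intermediate-value principle with bounded-but-not-unit step size. (The natural probabilistic variant --- two random $k$-sets are disjoint with constant probability when $k\ll\sqrt n$ and ``should'' have equal edge count with probability $\gtrsim 1/n$ --- runs into the same need to control the anticoncentration of $e(G[\cdot])$.)

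For the upper bound one must exhibit, for infinitely many $n$, a graph $G_n$ on $n$ vertices with $t(G_n)\le n/2-c\log\log n$. The plan is to make the degree sequence of $G_n$ so ``arithmetically spread out'' that $S\mapsto e(G[S])$ is nearly injective on large sets: since $\sum_{v\in A}\deg(v)-\sum_{v\in B}\deg(v)=2\bigl(e(G[A])-e(G[B])\bigr)+e(A,V\setminus A)-e(B,V\setminus B)$, attaching private gadgets to suitably many vertices with sizes that are ``independent over the integers'' (a half-graph-type construction already behaves somewhat this way) should forbid twins of size close to $n/2$. The $\log\log n$ term reflects how many mutually independent scales of such gadgets fit inside a graph on $n$ vertices, and the main difficulty here is to engineer those gadgets so that no vertex set of size exceeding $n/2-c\log\log n$ admits an equal-edge bipartition, which forces the interaction between the gadget sizes and the rest of the graph to be controlled very tightly.
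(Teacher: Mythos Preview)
Your proposal is a plan rather than a proof, and in both halves it misses the key device the paper uses.

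\smallskip
\textbf{Lower bound.} Your pigeonhole argument is fine for $t(n)\gtrsim n^{1/3}$, but the ``bucket, then correct'' upgrade to $\sqrt{n}$ is genuinely incomplete, and you already sense it: once two disjoint $k$-sets have edge-count difference $O(k)$, a single swap changes the count by an arbitrary integer in $[-(k-1),k-1]$, and there is no intermediate-value principle that lets you reach an exact target without further structural control over degrees inside the $k$-set. The paper bypasses this entirely. It colours each $k$-subset of $V(G)$ by the number of edges it induces; this is a proper colouring of the vertices of the Kneser graph $KG(n,k)$ using at most $\binom{k}{2}+1$ colours. By Lov{\'a}sz's theorem $\chi(KG(n,k))=n-2k+2$, so as soon as $\binom{k}{2}+1<n-2k+2$ there must be two \emph{adjacent} vertices of the same colour, i.e.\ two \emph{disjoint} $k$-sets inducing the same number of edges. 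Taking $k=\lfloor\sqrt{n}\rfloor$ gives $t(n)\ge\sqrt{n}$ in one line. Your self-complementarity and $\alpha/\omega$ observations are correct but are not needed here; the Kneser argument works uniformly for every $G$.

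\smallskip
\textbf{Upper bound.} Your heuristic --- spread the degree sequence so that $S\mapsto e(G[S])$ is nearly injective --- is in the right spirit but too vague to give the $\log\log n$ term, and a half-graph does not have this property (half-graphs actually have perfect twins). The paper's construction is concrete: take a disjoint union of cliques of \emph{odd} orders $a_1,\ldots,a_m$ with $a_j>2(a_1^2+\cdots+a_{j-1}^2)$. Because each $a_j$ is odd, no single clique can be split evenly, and the super-exponential growth makes the edge contributions of the cliques behave like digits in a number system, so any pair of twins must omit at least one vertex from each of $\Omega(m)$ cliques; with $n=\sum a_j$ one gets $m=\Theta(\log\log n)$. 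The ``gadget sizes independent over the integers'' picture you describe is exactly what the growth condition $a_j>2\sum_{i<j}a_i^2$ buys, but the cliques (rather than degree-altering attachments) are what make the accounting clean.
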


 Ben-Eliezer and  Krivelevich,~\cite{BK},  proved that  $t(G(n,p))=\lfloor n/2\rfloor$ with high probability, where $G(n,p)$ is the Erd\H{o}s-Renyi random graph.
In addition, Caro and Yuster proved that in a sparse graph there are twins of size almost $n/2$:

\begin{thm}[Caro and Yuster~\cite{caroyuster}]\label{num-edges}
For every fixed $\alpha > 0$ and for every $\epsilon > 0$ there exists $N=N(\alpha, \epsilon)$
so that for all $n > N$, if $G$ is a graph on $n$ vertices and at most $n^{2-\alpha}$ edges
then $t(G)\geq (1-\epsilon)n/2$.
\end{thm}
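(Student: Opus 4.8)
\noindent\emph{Proof idea.}
The plan is to produce the twins $A,B$ in two stages: first build a near-balanced tripartition $V(G')=A\cup B\cup R$ with $|A|=|B|$ whose two sides induce almost the same number of edges, and then exploit the freedom of discarding up to $\epsilon n$ vertices (the reservoir $R$) to correct the small remaining surplus exactly. The preliminary move is to pass to a bounded-degree graph: since $G$ has at most $n^{2-\alpha}$ edges, the number of vertices of degree exceeding $d_0:=\lceil 8n^{1-\alpha}/\epsilon\rceil$ is at most $2n^{2-\alpha}/d_0\le\epsilon n/4$. Delete them; the resulting graph $G'$ has $n'\ge(1-\epsilon/4)n$ vertices, at most $n^{2-\alpha}$ edges, and maximum degree at most $d_0=O_{\alpha,\epsilon}(n^{1-\alpha})=o(n)$. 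It suffices to find twins of size $(1-\epsilon)n/2$ inside $G'$.

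Next I would find a balanced split of $V(G')$ with tiny discrepancy. Put $k_0:=\lceil(1-\epsilon/2)n/2\rceil$, so for $n$ large, $2k_0\le n'$ and $n'-2k_0\ge\epsilon n/8$. For a triple $(A,B,R)$ with $V(G')=A\cup B\cup R$ and $|A|=|B|=k_0$, let $\Phi(A,B,R)$ denote the number of edges of $G'[A]$ minus the number of edges of $G'[B]$. Start from an arbitrary such triple $(A_0,B_0,R_0)$: exchanging the roles of $A_0$ and $B_0$ negates $\Phi$, and one can pass from $(A_0,B_0,R_0)$ to $(B_0,A_0,R_0)$ through $k_0$ elementary moves, each swapping a single vertex of the first part with a single vertex of the second while keeping the reservoir fixed. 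Since $\Delta(G')\le d_0$, every elementary move changes $\Phi$ by at most $2d_0$. As $\Phi$ runs from some value $\delta$ to $-\delta$ along this path with steps of size at most $2d_0$, a discrete intermediate value argument produces a triple with $|\Phi|\le 2d_0$; relabelling $A,B$ if needed, we get $(A_1,B_1,R_1)$ with $0\le\Phi(A_1,B_1,R_1)=:D\le 2d_0=O_{\alpha,\epsilon}(n^{1-\alpha})$. (A second--moment estimate for a uniformly random balanced split only gives discrepancy $O(n^{3/2-\alpha})$, which exceeds $n$ for small $\alpha$, so the swapping argument seems essential here.)

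Finally — and this is where I expect the real work — I would correct $D$ down to $0$ by deleting an equal number of vertices from $A_1$ and from $B_1$, shrinking each side by only $o(n)$. Deleting a vertex from the first (resp.\ second) side changes $\Phi$ by exactly minus (resp.\ plus) its induced degree on that side, a value in $\{0,1,\dots,d_0\}$; the goal is to choose a sequence of such deletions from the two sides whose net effect on $\Phi$ is exactly $-D$. Since $D=O(n^{1-\alpha})=o(n)$, $|R_1|\ge\epsilon n/8$, and $G'$ is sparse, one keeps making small adjustments — and when one side offers no fine enough change on its own, one plays the two sides against each other, using a difference of induced degrees to realize a change of size $1$ — until $\Phi$ reaches $0$ after $o(n)$ deletions from each side. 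The resulting sets have size at least $k_0-o(n)\ge(1-\epsilon)n/2$ and equal induced edge counts, hence are the desired twins.

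The main obstacle is precisely this last step: a nearly regular sparse graph has no low-degree vertices, so no single deletion is a ``fine'' move, and one must combine moves on the two sides — and, if necessary, reinsert a few reservoir vertices — to realize the final few units of correction and land on $0$ rather than overshoot it. A careful parity/greedy analysis of which net changes of $\Phi$ are attainable by bounded deletions from the two sides should close this gap; it is the only nonroutine point. (One cannot instead assemble the twins by repeatedly extracting small twin pairs via Theorem~\ref{general-bounds} and removing them, because the edges running between distinct extracted pieces are not controlled.)
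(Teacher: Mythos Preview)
Your outline tracks the paper's proof of Theorem~\ref{thm:size} (which implies Theorem~\ref{num-edges}) in its first move---deleting high-degree vertices---but then diverges in both the discrepancy step and the balancing step, and the gap you flag in the last step is real.

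For the discrepancy, the paper does not rely on a swap/intermediate-value argument giving $D=O(d_0)=O(n^{1-\alpha})$. Instead (Theorem~\ref{thm:almosttwins}, first bullet) it repeatedly uses a pigeonhole lemma on degree sums (Lemma~\ref{fixed-k}) to pair up subsets $A_i,B_i$ with $d(A_i)=d(B_i)$ and $|A_i|=|B_i|\le\lg n$, leaving only $O(\lg n)$ vertices uncovered; this yields ``almost twins'' $A,B$ with discrepancy at most $2\lg^2 n$, independent of the edge count. That is orders of magnitude smaller than your $D$, and it is what makes the subsequent exact repair easy.

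For the repair, the paper does not delete from $A$ or $B$ at all. Before finding $A,B$, it greedily extracts from $G'$ a ``toolkit'' $T$: an induced matching of $l=2\lg^2 n$ edges together with $2l$ independent vertices, chosen so that no vertex of $T$ is adjacent to the remaining set $S$. It then finds the almost-twins $A,B$ inside $S$. To fix discrepancy $\gamma\le l$, it simply appends $\gamma$ matching edges to $B$ (adding exactly $\gamma$ edges and $2\gamma$ vertices) and $2\gamma$ independent vertices to $A$ (adding $0$ edges and $2\gamma$ vertices). Because $T$ has no edges into $S$, this lands the discrepancy on $0$ exactly.

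Your step~3 is where the argument breaks. As you note, if $G'$ is close to $r$-regular then every deletion changes $\Phi$ by roughly $r$, and the constraint that you delete equally from both sides means the net change is a multiple of a single integer you do not control; a parity obstruction can block you from hitting $0$. Reinserting reservoir vertices does not obviously help, since a reservoir vertex may have comparable degree into $A_1$ and into $B_1$. The paper sidesteps this entirely by (i) forcing the discrepancy down to $O(\lg^2 n)$, and (ii) pre-building a correction kit whose effect on $\Phi$ is fully controlled because it has no edges into the almost-twins. If you want to salvage your approach, the clean fix is exactly this: set aside an induced matching plus an independent set disjoint from the neighbourhoods of the sets you will balance, and use additions rather than deletions.
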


In this paper, we do the following:
\begin{itemize}
   \item In Theorem~\ref{thm:size}, we improve the above result using a more general theorem on discrepancy for induced subgraphs.
   \item In Theorem~\ref{thm:exact}, we give several sufficient criteria under which the graph has \emph{perfect twins}; i.e., two twins spanning the whole graph.
   \item In Theorem~\ref{thm:forest}, we prove that all but at most two vertices of any forest could be split into two twins.
\end{itemize}

For disjoint vertex sets $A, B$   in a graph $G$, we denote by $e(A)$ the number of edges induced by $A$,
$e(A,B)$ the number of edges between $A$ and $B$.  {The} \emph{discrepancy} between $A$ and $B$ is ${\rm disc}(A,B) = |e(A) -e(B)|$.
Let $\lg n$  denote the logarithm of $n$ in base $2$.
The maximum and the minimum degree of a graph $G$ are denoted by $\Delta(G)$ and $\delta(G)$, respectively,
the set of vertices of degree $i$ is  $V_i$.  A pair of vertices is called \emph{consecutive} if the degrees of these vertices differ by exactly $1$ or $-1$.

\begin{thm}\label{thm:almosttwins}
   If $G$ is an $n$ vertex graph, $n\geq 16$, then there are vertex disjoint sets $A$ and $B$ such that
   \begin{itemize}
    \item ${\rm disc}(A,B) \leq 2\lg^2 n$ and $|A|=|B| \geq (n-2\lg n)/2$ or
    \item ${\rm disc}(A,B) \leq (\Delta(G)-\delta(G)+1)/2$ and $|A|=|B| = \lfloor n/2 \rfloor$.
   \end{itemize}
\end{thm}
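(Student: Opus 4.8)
The common engine for both parts is the identity: if $A,B$ are disjoint, $S=A\cup B$, and $\chi\colon S\to\{+1,-1\}$ is the $\pm1$ indicator of $A$, then checking the three types of edge (both ends in $A$, both in $B$, split) gives
$$e(A)-e(B)=\tfrac12\sum_{v\in S}\chi(v)\deg_S(v),\qquad \deg_S(v):=|N(v)\cap S|,$$
so $|A|=|B|$ iff $\sum_{v\in S}\chi(v)=0$, and if $S$ is split into pairs $\{x_i,y_i\}$ with one endpoint of each in $A$ then (writing $\epsilon_i=\chi(x_i)=-\chi(y_i)$)
$$e(A)-e(B)=\tfrac12\sum_i\epsilon_i\bigl(\deg_S(x_i)-\deg_S(y_i)\bigr).$$
Thus in each case one chooses a structure (which vertices to discard, how to pair the rest) and then $\pm1$ signs making a one-dimensional signed sum small. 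For the last step I will repeatedly use the elementary fact that for reals $w_1,\dots,w_m$ one can pick signs $\eta_i$ with $\bigl|\sum_i\eta_i w_i\bigr|\le\max_i|w_i|$ (process them in any order, always moving the running sum toward $0$).

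For the second bullet, first let $n$ be even. Order $V=\{v_1,\dots,v_n\}$ with $\deg v_1\le\cdots\le\deg v_n$, pair $\{v_{2i-1},v_{2i}\}$, and put $g_i=\deg v_{2i}-\deg v_{2i-1}\in[0,\Delta(G)-\delta(G)]$. Choose signs with $\bigl|\sum_i\epsilon_i g_i\bigr|\le\max_i g_i\le\Delta(G)-\delta(G)$, let $A$ contain $v_{2i-1}$ if $\epsilon_i=+1$ and $v_{2i}$ if $\epsilon_i=-1$, and $B=V\setminus A$. Here $S=V$ and $\deg_S=\deg$, so by the identity $|e(A)-e(B)|=\tfrac12\bigl|\sum_i\epsilon_i g_i\bigr|\le(\Delta(G)-\delta(G))/2$ while $|A|=|B|=n/2=\lfloor n/2\rfloor$. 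If $n$ is odd, delete any vertex $w$ and apply the even case to $G-w$: the resulting $A,B\subseteq V\setminus\{w\}$ satisfy $|A|=|B|=\lfloor n/2\rfloor$ and ${\rm disc}_G(A,B)=|e_{G-w}(A)-e_{G-w}(B)|\le(\Delta(G-w)-\delta(G-w))/2$; since $\Delta(G-w)\le\Delta(G)$ and $\delta(G-w)\ge\delta(G)-1$, this is at most $(\Delta(G)-\delta(G)+1)/2$. (The loss of $+1$ here is exactly what produces that term in the statement.)

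For the first bullet I use a smarter discard/pair choice. Pair up vertices within each degree class $V_d$; this leaves one unpaired vertex in each odd-sized class, forming a set $R$ of $p$ vertices whose degrees are pairwise distinct, and one checks $n-p$ is even (it is a sum of even numbers). If $p\le 2\lg n$, set $S=V\setminus R$. For every surviving pair $\{u,v\}\subseteq V_d$ we get $|\deg_S(u)-\deg_S(v)|=\bigl|\,|N(u)\cap R|-|N(v)\cap R|\,\bigr|\le|R|=p\le 2\lg n$, so picking signs via the elementary fact gives $|e(A)-e(B)|=\tfrac12\bigl|\sum_i\epsilon_i(\deg_S(x_i)-\deg_S(y_i))\bigr|\le\lg n\le 2\lg^2 n$, while $|A|=|B|=(n-p)/2\ge(n-2\lg n)/2$; since $n\ge16$ all the inequalities are comfortable.

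It remains to handle $p>2\lg n$, where $R$ itself cannot simply be discarded. The plan is to pair up (most of) $R$ in $\lceil\lg n\rceil$ rounds: in round $j$ group the still-unpaired vertices by the value of $\lfloor\deg v/2^{\,j}\rfloor$ and pair two of them inside each group (each new pair then has degree difference $<2^{\,j}$), carrying the at most one leftover per group to round $j+1$ and finally discarding the $O(1)$ vertices that survive every round; with $\lceil\lg n\rceil$ rounds this keeps the total number of discarded vertices $O(\lg n)$. Then one must choose signs: the many ``fine'' pairs (small degree difference) are handled by the elementary fact, and the few ``coarse'' pairs are matched against one another and one recurses on the differences of their degree differences, so that each of the $\le\lceil\lg n\rceil$ scales contributes $O(\lg n)$ to the signed sum. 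The hard part is making these two estimates consistent: a pair created in round $j$ already encodes $2^{\,j}$ original vertices, so discards are really only affordable in the first round and all later parity mismatches must be resolved by promotion rather than deletion (which is what pins the total discard at $\le 2\lg n$); and replacing $\deg$ by $\deg_S$ perturbs every weight by up to $|V\setminus S|\le 2\lg n$, a perturbation which, propagated through $\lceil\lg n\rceil$ rounds of the recursive signing, accumulates to exactly the $2\lg^2 n$ appearing in the statement. Getting these two bookkeeping estimates to fit together is, I expect, the main obstacle.
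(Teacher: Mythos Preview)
Your argument for the second bullet is correct and is a clean constructive alternative to the paper's proof. The paper instead takes an extremal route: choose $A,B$ of size $\lfloor n/2\rfloor$ minimizing ${\rm disc}(A,B)$ and note that if $e(A)>e(B)$ one can swap some $a\in A$, $b\in B$ with $d(a)>d(b)$; this drops $e(A)-e(B)$ by $d(a)-d(b)$, so minimality forces $2\,{\rm disc}(A,B)\le d(a)-d(b)\le\Delta-\delta$. Your sorted-pairing-and-balancing reaches the same bound, and your identity $e(A)-e(B)=\tfrac12\sum_v\chi(v)\deg_S(v)$ is precisely what the paper packages as Lemma~\ref{degree-pairs}.

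For the first bullet there is a genuine gap. The case $p\le 2\lg n$ is fine, but for $p>2\lg n$ you only sketch a dyadic multi-round pairing and explicitly leave the bookkeeping open. The difficulty you flag is real: a pair produced in round $j$ can have degree gap up to $2^j$, and the elementary sign-balancing fact applied round by round yields only $\sum_{j\le\lceil\lg n\rceil}2^j\approx 2n$, not $2\lg^2 n$; the recursive signing you allude to is not carried out, and it is not clear it can be made to give the stated bound. (Since the statement is literally an ``or'', your second-bullet argument technically proves the theorem as written; but the first bullet is the conclusion that Theorem~\ref{thm:size} actually invokes, so it requires its own proof.)

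The paper's route to the first bullet avoids degree classes entirely, and the missing ingredient is a subset-sum pigeonhole lemma (Lemma~\ref{fixed-k}): if $X$ is a multiset of at least $2k$ integers from $\{0,\dots,n-1\}$ with $k\ge\lg n$, then two disjoint equal-size sub-multisets of size at most $k$ have the same sum (otherwise $\binom{|X|}{k}$ distinct $k$-sums would have to fit among $k(n-1)+1$ values). Applying this repeatedly to the degree multiset of the remaining vertices greedily peels off blocks $(A_i,B_i)$ with $|A_i|=|B_i|\le\lg n$ and $d(A_i)=d(B_i)$, halting only when fewer than $2\lg n$ vertices are left. Discard this leftover $S$; in $G'=G-S$ one has $|d_{G'}(A_i)-d_{G'}(B_i)|=|e(A_i,S)-e(B_i,S)|\le|A_i|\,|S|\le 2\lg^2 n$, and then your own alternating-sign trick, applied to the blocks $(A_i,B_i)$ rather than to vertex pairs, gives ${\rm disc}(A,B)\le 2\lg^2 n$ with $|A|=|B|\ge(n-2\lg n)/2$.
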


\begin{theorem} \label{thm:size}
If $G$  is  a graph on  $n$ vertices and $e$ edges then
$t(G) \geq  \frac{n}{2} \left( 1- \frac{20 \sqrt{e} \lg n}{n}\right).$
\end{theorem}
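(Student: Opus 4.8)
The plan is to strip off the isolated vertices of $G$, apply Theorem~\ref{thm:almosttwins} to what remains, and then upgrade the resulting near‑twins to exact twins by a clean‑up that discards only $O(\sqrt e\,\lg n)$ further vertices. We may assume $e\ge1$.

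First a trivial case. A graph with $e$ edges has a vertex cover with at most $e$ vertices, and its set of isolated vertices is independent, so if $k$ denotes the number of non‑isolated vertices of $G$ then $G$ has an independent set $W$ with $|W|\ge n-\min(e,k)$; splitting $W$ into two halves of size $\lfloor|W|/2\rfloor$ gives twins. Hence $t(G)\ge\lfloor(n-\min(e,k))/2\rfloor$, and a routine estimate shows this is at least $n/2-10\sqrt e\lg n$ whenever $\min(e,k)\le 19\sqrt e\lg n$; in particular we are done if $e\le 361\lg^2 n$ or $k\le 19\sqrt e\lg n$. So assume $e>361\lg^2 n$ (whence $\sqrt e>19\lg n$) and $k>19\sqrt e\lg n$. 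Let $C$ be the set of non‑isolated vertices, $H=G[C]$, so $H$ has $k$ vertices, no isolated vertex, $e$ edges, and $\sqrt{2e}\le k\le 2e$; in particular $k>361>16$.

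Apply Theorem~\ref{thm:almosttwins} to $H$: it yields disjoint $A',B'\subseteq C$ with $|A'|=|B'|=m'\ge (k-2\lg k)/2$ and ${\rm disc}(A',B')\le 2\lg^2 k$. The heart of the matter is the following.

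\smallskip
\noindent\emph{Claim.} $H$ has twins $A'',B''$ (disjoint, with $|A''|=|B''|$ and $e(A'')=e(B'')$) satisfying $|A''|=|B''|\ge m'-9\sqrt e\lg n$.
\smallskip

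\noindent Granting the Claim, adjoin $\lfloor(n-k)/2\rfloor$ of the $n-k$ isolated vertices of $G$ to each of $A'',B''$; this produces twins $A,B$ of $G$ with
\[
|A|=|B|\ \ge\ (m'-9\sqrt e\lg n)+\Big\lfloor\tfrac{n-k}{2}\Big\rfloor\ \ge\ \frac n2-\lg k-9\sqrt e\lg n-\frac12 ,
\]
and since $\lg k\le 1+\lg e\le 2\lg n$ and $\sqrt e>19\lg n$, one checks the right‑hand side is at least $n/2-10\sqrt e\lg n$. Thus everything reduces to the Claim.

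To prove the Claim one repairs the discrepancy $d:={\rm disc}(A',B')\le 2\lg^2 k$. The basic tool is that deleting a vertex changes an induced edge count by exactly the deleted vertex's current degree, together with the fact that a graph with $f\ge1$ edges has a non‑isolated vertex of degree less than $\sqrt{2f}$ (otherwise the minimum non‑isolated degree $\delta$ would satisfy $f\ge\binom{\delta+1}{2}>\delta^2/2$, contradicting $\delta\ge\sqrt{2f}$). Deleting minimum‑positive‑degree vertices from the heavier side therefore lowers its edge count in steps of size less than $\sqrt{2e}\ (\le \sqrt{2e(A')}\,)$, so after at most $d+\sqrt{2e}$ such deletions the two edge counts differ by a residual $g<\sqrt{2e}$. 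The step I expect to be the main obstacle is closing this residual gap \emph{exactly} while (i) deleting only $O(\sqrt e\,\lg n)$ further vertices and (ii) not unbalancing the two sizes: one cannot in general lower a graph's edge count by a prescribed small amount with few deletions — a large clique is the obstruction — but one can use the freedom to delete from \emph{both} sides together with the dichotomy that a subgraph on few edges is either sparse, so that minimum‑degree deletions move its edge count essentially one unit at a time and any target is reachable, or contains a dense piece on only $O(\sqrt e)$ vertices, which can be excised (and, while being shrunk vertex by vertex, passes through each of its few possible edge counts). Iterating this while controlling the number of ``phases'' closes the gap within the deletion budget of the Claim; rebalancing the sizes at the end — by a few more minimum‑degree or isolated‑vertex deletions, or by running the whole repair symmetrically on the two sides — is a further point to attend to. Making precise that this alternating reduction terminates quickly (does not stall) is the remaining technical work.
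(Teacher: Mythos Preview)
Your proof is incomplete: the Claim is where all the work lies, and you yourself flag its core step as ``the remaining technical work.'' Concretely, after your first round of deletions you are left with a residual discrepancy $g<\sqrt{2e}$ and an imbalance in the sizes of the two sides; you then propose an ``alternating reduction'' based on a sparse/dense dichotomy, but you give no actual argument that this terminates within the budget, nor that the two sizes can be re-equalised at the end without creating a new discrepancy. Deleting a vertex is a blunt tool here --- it changes the edge count by an amount you do not control (only bounded above by $\sqrt{2e}$), and simultaneously changes the size, so there is no obvious reason the process converges. As written, the Claim is a restatement of the problem rather than a proof.

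The paper sidesteps this difficulty entirely, and the idea is worth knowing. Rather than repairing the discrepancy by \emph{deletion} after the fact, it pre-builds a ``repair kit'' before applying Theorem~\ref{thm:almosttwins}. First it removes the (at most $n f$) vertices of degree exceeding $2e/(nf)$, so that in the remaining graph $G'$ the maximum degree is small. Then, greedily, it picks $l=2\lg^2 n$ pairwise non-adjacent edges $e_1,\dots,e_l$ and $2l$ further vertices $v_1,\dots,v_{2l}$, each time also removing the neighbourhood of what was picked; the degree bound guarantees this costs only $O(nf)$ vertices. Call $T$ the set of chosen vertices and $S$ the rest; by construction there are no edges between $T$ and $S$, and $T$ induces a matching of size $l$ plus $2l$ isolated vertices. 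Now apply Theorem~\ref{thm:almosttwins} to $G[S]$ to get $A,B$ with discrepancy $\gamma\le l$, say $e(A)\ge e(B)$. The repair is then exact and trivial: add $\gamma$ of the matching edges to $B$ (raising $e(B)$ by $\gamma$ and $|B|$ by $2\gamma$) and $2\gamma$ of the isolated vertices of $T$ to $A$ (raising $|A|$ by $2\gamma$ without changing $e(A)$). Since $T$ has no edges to $S$, the result is a pair of genuine twins. The point is that adding a prescribed number of edges and a prescribed number of vertices \emph{independently} is easy if you have stockpiled disjoint isolated edges and isolated vertices; your deletion scheme couples the two quantities and that is why it stalls.
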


This theorem implies, in particular, that any $n$-vertex graph on $o\left(n^2 / \lg^2n\right)$  edges  has twins of size $n/2 - o(n)$ each and
 that planar graphs have twins of size at least $n/2 - c \lg n$ each.

\begin{thm} \label{thm:exact}
 Let $G$ be  a graph on   $n$  vertices,  where $n$ is even. If one of the following conditions~\ref{enum:consecutive-degrees}-\ref{enum:disjoint-consecutive-pairs} holds then $t(G)=n/2$.
 \begin{enumerate}[label= \arabic*)]
 \item The degree sequence of $G$ forms a set of consecutive integers.\label{enum:consecutive-degrees}
 \item $|V_i|$ is even for each $i$.\label{enum:even-vertex-sets}
 \item $n\geq 90$ and  $|\{i: |V_i|\, \text{ is odd}\}|>n/2$.\label{enum:large-odd-set}
 \item There are at  least $\Delta(G)-\delta(G)$ disjoint consecutive pairs of vertices.\label{enum:disjoint-consecutive-pairs}
\end{enumerate}
\end{thm}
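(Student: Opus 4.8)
The plan is to first reduce Theorem~\ref{thm:exact} to a statement about the degree sequence alone. Since $n$ is even, $t(G)=n/2$ holds exactly when $V(G)$ splits into two parts $A,B$ with $|A|=|B|=n/2$ and $e(A)=e(B)$; note $B=\overline{A}$. Writing $\sigma(X)=\sum_{v\in X}d(v)$ and counting edge-endpoints inside each part gives $2e(A)=\sigma(A)-e(A,B)$ and $2e(B)=\sigma(B)-e(A,B)$, hence $2\bigl(e(A)-e(B)\bigr)=\sigma(A)-\sigma(B)$. Thus $t(G)=n/2$ if and only if the multiset $D=\{d(v):v\in V(G)\}$ can be partitioned into two blocks of size $n/2$ and equal sum (each block then summing to $e$). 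From here on I only need to verify this splitting property for a multiset of $n$ integers lying in $[\delta(G),\Delta(G)]$.

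Two elementary lemmas on integer multisets do most of the work. \emph{Interval lemma}: if $a_1\le\dots\le a_N$ with $a_{j+1}-a_j\le 1$ for every $j$, then for each $k$ the sums $\sigma(T)$ over $k$-element subsets $T$ run over exactly the integer interval $\bigl[\sum_{i\le k}a_i,\ \sum_{i>N-k}a_i\bigr]$ — from any non-maximal $T$ one can raise $\sigma(T)$ by exactly $1$ with a single exchange, since no intermediate value is skipped. \emph{Balancing lemma}: a multiset of $2\ell$ integers lying in an interval of length $r$ splits into two $\ell$-element blocks whose sums differ by at most $r$ — take a size-balanced split minimizing the sum-difference; if that difference exceeds $r$ then some element of the heavier block exceeds some element of the lighter one, and swapping the two strictly decreases the difference, a contradiction. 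Parts~\ref{enum:even-vertex-sets} and~\ref{enum:consecutive-degrees} now follow at once: for~\ref{enum:even-vertex-sets} split each $V_i$ in half; for~\ref{enum:consecutive-degrees} the sorted degree sequence satisfies the hypothesis of the interval lemma, and since the sum of the $n/2$ smallest degrees and the sum of the $n/2$ largest degrees add up to $2e$, the value $e$ lies between them and hence is realized.

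For part~\ref{enum:disjoint-consecutive-pairs}, put $s=\Delta(G)-\delta(G)$, fix $s$ pairwise disjoint consecutive pairs $P_1,\dots,P_s$ with $P_k=\{u_k,v_k\}$, and let $R$ be the set of the remaining $n-2s$ vertices (an even, nonnegative number). By the balancing lemma, $\{d(w):w\in R\}\subseteq[\delta(G),\Delta(G)]$ splits into halves $R_A,R_B$ with $\bigl|\sigma(R_A)-\sigma(R_B)\bigr|\le s$; put $R_A$ into $A$, $R_B$ into $B$, and from each $P_k$ send one endpoint to $A$ and the other to $B$. Each such choice alters $\sigma(A)-\sigma(B)$ by $\pm 1$ (consecutive degrees differ by $1$), so over the $s$ pairs one can realize any integer of the same parity as $s$ in $[-s,s]$. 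Since $d(u_k)+d(v_k)$ is odd for each $k$, we get $\sigma(R)=2e-\sum_k\bigl(d(u_k)+d(v_k)\bigr)\equiv s\pmod 2$, so $\sigma(R_A)-\sigma(R_B)$ has the parity of $s$ and can be cancelled by the pair choices; the resulting $A$ has $|R_A|+s=n/2$ vertices and $\sigma(A)=\sigma(B)$, as needed.

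Part~\ref{enum:large-odd-set} is the crux. I would first pair vertices within each class: $V_i$ contributes $\lfloor|V_i|/2\rfloor$ pairs, distributed one-and-one between $A$ and $B$, which changes neither $|A|-|B|$ nor $\sigma(A)-\sigma(B)$. What is left is one vertex from each odd class — a \emph{set} $L$ of $\ell:=|\{i:|V_i|\text{ odd}\}|$ distinct integers in $\{0,\dots,n-1\}$, with $\ell$ even (because $\ell\equiv n\pmod 2$), $\ell>n/2$, and $\sigma(L)$ even; it then remains only to split $L$ into two $\ell/2$-element blocks of equal sum. Listing $L$ in increasing order and pairing consecutive entries, this reduces once more to partitioning the multiset of gaps between the paired entries into two equal-sum parts, which one can do unless a single gap is large relative to the rest or a residue obstruction intervenes; here one exploits that $L$ omits fewer than $n/2$ of the integers in $\{0,\dots,n-1\}$, so such large gaps are scarce, and when one does occur $L$ breaks into a dense low block and a dense high block to which the interval lemma applies directly. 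Turning this dichotomy and the attendant parity bookkeeping into a complete argument, and disposing of the finitely many exceptional configurations — which is precisely what forces the bound $n\ge 90$ — is the step I expect to be the main obstacle.
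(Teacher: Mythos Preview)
Your reduction to a degree-sum partition and your treatments of parts~\ref{enum:consecutive-degrees}, \ref{enum:even-vertex-sets} and \ref{enum:disjoint-consecutive-pairs} are correct and close to the paper's. For~\ref{enum:even-vertex-sets} and~\ref{enum:disjoint-consecutive-pairs} you do essentially what the paper does (the paper obtains your ``balancing lemma'' bound via the greedy alternation of the sorted degree sequence rather than by a minimality/swap argument, but the content is the same). For~\ref{enum:consecutive-degrees} your interval lemma is a genuinely different and somewhat cleaner route: the paper instead takes a minimum-discrepancy bipartition and argues directly that if $e(A)<e(B)$ then a single swap of a degree-$d$ vertex in $A$ with a degree-$(d{+}1)$ vertex in $B$ would lower the discrepancy, forcing $A$ to contain every degree above its minimum and $B$ every degree below its maximum, a contradiction.

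The real gap is part~\ref{enum:large-odd-set}. You correctly isolate the residual problem --- partition a set $L$ of $\ell>n/2$ distinct integers in $\{0,\dots,n-1\}$, with $\ell$ even and $\sum L$ even, into two $\ell/2$-element blocks of equal sum --- but your proposed attack via consecutive-gap pairing and a density dichotomy is left unfinished, and it is not clear it can be completed without substantial further work; the ``residue obstructions'' and the exceptional configurations you allude to are exactly where the difficulty lives. The paper does not attempt this directly. It invokes a theorem of K\'arolyi: any set of $m\ge 89$ integers in $\{1,\dots,2m-2\}$ can be split into two parts of almost equal sizes and almost equal sums. Since $\ell>n/2$, one has (after translating by $+1$) $L+1\subseteq\{1,\dots,n\}\subseteq\{1,\dots,2\ell-2\}$, so K\'arolyi applies; the ``almost equal'' becomes ``equal'' by the parity observations you already made, and the hypothesis $n\ge 90$ is there precisely to meet K\'arolyi's threshold. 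So the missing ingredient in your argument is this black-box balanced-subset-sum result; once you cite it, part~\ref{enum:large-odd-set} follows in two lines.
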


\begin{theorem}\label{thm:forest}
 If $G$ is a forest then $t(G) \geq \lceil n/2 \rceil - 1$.
\end{theorem}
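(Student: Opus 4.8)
The plan is to induct on the number of vertices $n$ of the forest $G$, aiming to partition all but at most one vertex of $G$ into twins $A, B$ when $n$ is odd (leaving one vertex out, which is unavoidable by parity of $|A|+|B|$), and into twins plus at most two leftover vertices when $n$ is even — matching $\lceil n/2\rceil - 1$ in both cases. The base cases are small forests, handled directly. For the inductive step I would first reduce to connected $G$, i.e.\ to a tree: if $G$ has components $T_1, \dots, T_m$, I would like to combine twins found in each $T_i$, but since twins in different components can be merged freely (edges between $A$ and $B$ never contribute to $e(A)$ or $e(B)$), the main subtlety is just bookkeeping the at most one leftover vertex per component and showing these can be absorbed. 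So the heart of the matter is trees.

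For a tree $T$, the key structural move is to peel off a leaf together with its neighbor, or more robustly to find a vertex $v$ all of whose children are leaves (a ``cherry''-type vertex, which exists in any tree on $\geq 2$ vertices by taking a deepest vertex in a rooted $T$). Let $v$ have $\ell \geq 1$ leaf-children $u_1, \dots, u_\ell$. If $\ell \geq 2$, then $\{u_1\}$ and $\{u_2\}$ are twins on one vertex each (both inducing no edges), and deleting $u_1, u_2$ leaves a smaller forest; applying induction and adjoining $u_1$ to $A$, $u_2$ to $B$ keeps the discrepancy zero and adds one vertex to each side, so the count is preserved. The delicate case is $\ell = 1$, i.e.\ $v$ has a single leaf-child $u$ and a parent $w$; here $\{u\}$ contributes no edge while $\{v\}$ contributes none either, but $\{u, v\}$ induces one edge. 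One clean approach: delete the two vertices $u$ and $v$, apply induction to $T - \{u,v\}$ to get twins $A', B'$ spanning all but at most one vertex of an $(n-2)$-vertex forest, and then the leaf $u$ can be added to whichever of $A', B'$ we like (contributing $0$ edges); the vertex $v$ is the one vertex we are allowed to leave out. This already gives twins on $\lceil n/2\rceil - 1$ vertices when the parity works out. The more careful accounting — showing we never need to discard more than the allotted number of vertices, and handling the interaction with the leftover vertex coming from induction — is where one must be attentive.

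I would organize the induction so that the statement carried is slightly stronger and parity-sensitive: for a forest on $n$ vertices there exist twins $A, B$ with $|A| = |B| = \lceil n/2\rceil - 1$, and moreover one can prescribe on which side a designated leaf lands, or one can arrange the two ``unused'' vertices to be an independent set or a specified edge — whichever auxiliary property makes the leaf-peeling splice go through. The main obstacle is precisely the $\ell = 1$ cherry case combined with parity: peeling $\{u,v\}$ costs two vertices, and we must ensure the single discarded vertex $v$ together with any vertex discarded by the inductive call does not exceed the budget; this forces care in choosing \emph{which} vertex to peel (preferring $\ell \geq 2$ cherries, or a leaf whose removal alone suffices) and possibly strengthening the inductive hypothesis to track the exact set of unused vertices and an edge/non-edge condition on them. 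Once that invariant is set up correctly, each inductive step is a short case check on the local structure at a deepest vertex, and the component-merging reduction at the start disposes of the general forest.
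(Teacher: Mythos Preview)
Your outline has a concrete error that exposes the real difficulty. In the $\ell \geq 2$ case you say that adjoining $u_1$ to $A'$ and $u_2$ to $B'$ ``keeps the discrepancy zero'', but $u_1$ and $u_2$ are both adjacent to $v$ in $G$, so $e(A'\cup\{u_1\}) = e(A') + [v\in A']$ and $e(B'\cup\{u_2\}) = e(B') + [v\in B']$; if $v$ lies in exactly one of $A',B'$ the twins are destroyed. The same issue recurs in the $\ell=1$ case and in your component-merging step: leftover vertices from different components cannot simply be ``absorbed'' into $A$ and $B$, because each one may have a neighbor already placed on one side, and with $m$ components you may accumulate up to $2m$ leftovers against a global budget of at most two. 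You are aware that a stronger inductive hypothesis is needed, but you do not propose one, and finding a workable invariant is the entire content of the proof.

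The paper's argument is indeed an induction carrying a strengthened invariant, but the invariant (``good twins'') is elaborate: the uncolored vertices must all be isolated or leaves of $G$, the colored graph has no isolated vertices, and at most two colored vertices (both in $A$, one a leaf and one a non-leaf of the colored graph, with further degree restrictions) may have uncolored neighbors. Maintaining this through an inductive step that deletes \emph{all} leaves attached to a chosen vertex requires a case analysis running to roughly a dozen subcases, each handled by one or two explicit ``$(x,y)$-moves'' that redistribute uncolored leaves between the sides. Only after good twins are established does a separate argument convert them into twins of size $\lceil n/2\rceil - 1$. So your high-level plan (peel leaves, carry a stronger statement) is right in spirit, but the missing piece --- a precise invariant strong enough to control where the neighbors of peeled leaves land --- is exactly the hard part, and your sketch does not supply it.
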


 If $n$ is odd this is clearly best possible. For even $n$ this is attained, for example, by a star. Such a graph has no perfect twins.

The rest of the paper is organized as follows: In Section~\ref{sec:prelim}, we cite and prove some useful observations about degree sequences and include a short proof of Theorem~\ref{general-bounds} for completness. Sections~\ref{sec:almosttwins}--\ref{sec:forest}
contain the proofs of Theorems ~\ref{thm:almosttwins}--\ref{thm:forest}.

\section{Preliminary and known results}
\label{sec:prelim}

Here, we denote a degree of a vertex $v$ in a graph $G$ by $d(G, v)$ or simply $d(v)$.   For a set $A$ of  vertices,  $d(G,A)=d(A) = \sum_{a\in A} d(a)$.
The set of all $k$-element subsets of a set $X$ is denoted by $\binom{X}{k}$.
For a graph $G$ and a vertex set $W \subseteq V(G)$ the induced subgraph of $G$ on $V(G) \setminus W$ is denoted by $G - W$.

\begin{lemma}\label{degree-pairs}
Let $G$ be an $n$-vertex  graph  and $A, B$ disjoint vertex sets of equal size such that $V(G)=A\cup B$.
If $d(A) = d(B) $ then $e(A)= e(B)$.
If $A$ is a disjoint union of $A_1, \ldots, A_k$ and $B$ is a  disjoint union of $B_1, \ldots, B_k$,  such that $|A_i|=|B_i|$ and $|d(A_i)-d(B_i)|\leq \epsilon$, $i=1, \ldots, k$ then there are disjoint vertex  subsets $A', B'$ of size $n/2$ each such that ${\rm disc}(A',B') \leq \epsilon$.
\end{lemma}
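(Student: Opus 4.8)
The plan is to handle the two assertions separately, since the first is the key computational identity and the second is a packaging step built on top of it. For the first assertion, I would simply count edges with multiplicity through their endpoints. Since $V(G) = A \cup B$ is a partition, every edge of $G$ either lies inside $A$, lies inside $B$, or crosses between $A$ and $B$; hence $d(A) = \sum_{a \in A} d(a) = 2e(A) + e(A,B)$, because each edge inside $A$ contributes $2$ to the sum and each crossing edge contributes exactly $1$. Symmetrically $d(B) = 2e(B) + e(A,B)$. Subtracting, $d(A) - d(B) = 2(e(A) - e(B))$, so $d(A) = d(B)$ forces $e(A) = e(B)$. This is the whole argument for the first statement; there is no obstacle here.

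For the second assertion, the idea is to build the desired sets $A'$ and $B'$ by transferring, inside each matched pair $(A_i, B_i)$, enough vertices from the side of larger degree-sum to the side of smaller degree-sum so as to \emph{equalize} the two degree-sums exactly, while keeping the sizes balanced. Concretely, for each $i$ I would look at the multiset of vertex degrees appearing in $A_i \cup B_i$ and greedily swap pairs of vertices between $A_i$ and $B_i$ (one from each side, to preserve $|A_i| = |B_i|$) to drive the difference $d(A_i) - d(B_i)$ toward $0$. The point is that a single swap of a vertex of degree $x$ out of $A_i$ for a vertex of degree $y$ out of $B_i$ changes $d(A_i) - d(B_i)$ by $-2(x-y)$, an even integer, so this procedure cannot in general reach exactly $0$ — but it \emph{can} always reach a value whose absolute value is at most the current discrepancy, and more importantly one does not actually need the per-block discrepancy to vanish. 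The cleaner route, which I would take instead, is: set $A' = \bigcup_i A_i$ and $B' = \bigcup_i B_i$ from the start (so $|A'| = |B'| = n/2$ automatically, since $\sum_i |A_i| = \sum_i |B_i|$ and the $A_i, B_i$ partition $V(G)$), and then observe
\[
|d(A') - d(B')| = \Bigl| \sum_{i=1}^k \bigl( d(A_i) - d(B_i) \bigr) \Bigr|.
\]
This is \emph{not} bounded by $\epsilon$ in general — the errors can add up to $k\epsilon$ — so a naive union does not suffice, and this is exactly the subtle point. The fix is to first \emph{re-sign} the blocks: within each block we are free to rename $A_i$ and $B_i$ (this keeps $|A_i| = |B_i|$ and the hypothesis $|d(A_i) - d(B_i)| \le \epsilon$), so we may arrange the signs of the differences $d(A_i) - d(B_i) \in [-\epsilon, \epsilon]$ so that the running partial sums stay in $[-\epsilon, \epsilon]$; a standard greedy argument (always flip the next block to pull the partial sum back toward $0$) achieves $\bigl|\sum_i (d(A_i) - d(B_i))\bigr| \le \epsilon$ for the final total. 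Then $|d(A') - d(B')| \le \epsilon$, and applying the edge-counting identity from the first part (now $2e(A') + e(A',B') = d(A')$ and likewise for $B'$, since $A' \cup B' = V(G)$ is again a partition) gives $2|e(A') - e(B')| = |d(A') - d(B')| \le \epsilon$, hence ${\rm disc}(A',B') = |e(A') - e(B')| \le \epsilon/2 \le \epsilon$.

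The main obstacle is precisely the accumulation of per-block errors: the hypothesis controls each $|d(A_i) - d(B_i)|$ by $\epsilon$ but says nothing about their sum, so the lemma is false if one is forced to keep the given labelling of each pair. The resolution — exploiting the freedom to swap the roles of $A_i$ and $B_i$ block by block and running a greedy sign choice to keep the partial sums bounded by $\epsilon$ — is the one nontrivial idea, and it should be stated carefully. Everything else (the degree-counting identity, the fact that $A' \cup B'$ is still all of $V(G)$, and the final division by $2$) is routine. I would also note the parity subtlety in passing: ${\rm disc}(A',B')$ is an integer, so in fact ${\rm disc}(A',B') \le \lfloor \epsilon/2 \rfloor$, though the weaker bound $\epsilon$ is all that is claimed.
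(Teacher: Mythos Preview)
Your proposal is correct and follows essentially the same route as the paper: the first part is the identical degree-counting identity $d(A)=2e(A)+e(A,B)$, and for the second part you, like the paper, exploit the freedom to swap the roles of $A_i$ and $B_i$ block by block so that $\bigl|\sum_i (d(A_i)-d(B_i))\bigr|\le\epsilon$, then invoke the first part. The only cosmetic difference is in how the signs are chosen: the paper first sorts the blocks so that $|d(A_1)-d(B_1)|\ge\cdots\ge|d(A_k)-d(B_k)|$ and then alternates signs (using the standard fact that an alternating sum of a nonincreasing sequence of nonnegative reals is bounded by its first term), whereas you run an online greedy that keeps every partial sum in $[-\epsilon,\epsilon]$; both are routine and yield the same bound, and your observation that one actually gets ${\rm disc}(A',B')\le\epsilon/2$ is also implicit in the paper's computation.
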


\begin{proof}
 {First assume that $d(A) = d(B)$. Then} we have that $$e(A)-e(B) = (d(A)-e(A,B))/2 - (d(B)-e(A,B))/2 = (d(A)-d(B))/2 = 0.$$
 {Now let $A$ and $B$ be the disjoint union of $A_1,\ldots.A_k$ and $B_1,\ldots,B_k$, respectively, such that $|A_i| = |B_i|$ for each $i = 1,\ldots,k$.} Assume, without loss of generality, that $|d(A_1)-d(B_1)| \geq |d(A_2)-d(B_2)|\geq \cdots \geq |d(A_k)-d(B_k)|$.  Define, for each $i=1,\ldots,k$,
 \begin{equation}\nonumber
  C_i= \begin{cases}
   A_i & i \mbox{ is odd and } d(A_i)\geq d(B_i),\\
   B_i & \mbox{otherwise}.
  \end{cases}
 \end{equation}

 Let $C_i' = \{A_i, B_i\} - C_i$ and consider $A'=C_1 \cup \cdots \cup C_k$ and $B'=C_1'\cup \cdots \cup C_k'$.  Since $e(A')=(d(A')-e(A',B'))/2$ and $e(B')=(d(B')-e(A',B'))/2$, we have $2(e(A')-e(B'))= d(A')-d(B') = \sum_{i=1}^k (d(C_i)-d(C'_i)).$ Since this sum has terms non-increasing in absolute value with alternating signs and bounded in absolute value by $\epsilon$, the sum itself is at most $\epsilon$.
\end{proof}

We shall need the following number theoretic result.

\begin{lemma}\label{fixed-k}
 Let $n\geq 16$  and $k \geq \lg n$. Let $X$ be a multiset  $\{a_1, a_2, \ldots \}$  of  integers from $\{0, 1, \ldots, n-1\}$  such that for any $l\leq k$
 and for any disjoint index sets $\{i_1, \ldots, i_l\}$ and $\{j_1, \ldots, j_l\}$  the following holds:
 $a_{i_1}+a_{i_2}+\cdots+a_{i_l} \neq a_{j_1}+a_{j_2}+\cdots+a_{j_l}$.  Then $|X| < 2k$.
\end{lemma}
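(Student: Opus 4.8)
The plan is a pigeonhole argument on the sums of $k$-element sub-families of $X$. Suppose, for contradiction, that $|X|\geq 2k$, and fix $2k$ of its elements as an indexed family $a_1,\dots,a_{2k}$ (repetitions allowed). For a set $S\in\binom{\{1,\dots,2k\}}{k}$ write $\sigma(S)=\sum_{i\in S}a_i$. Since every $a_i$ lies in $\{0,1,\dots,n-1\}$, we have $\sigma(S)\in\{0,1,\dots,k(n-1)\}$, so $\sigma$ attains at most $k(n-1)+1$ distinct values. If I can show
\[
\binom{2k}{k}>k(n-1)+1,
\]
then by pigeonhole there are two \emph{distinct} $k$-sets $S\neq T$ with $\sigma(S)=\sigma(T)$. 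Cancelling the common indices, the sets $S':=S\setminus T$ and $T':=T\setminus S$ are disjoint, have equal size $l:=k-|S\cap T|$ with $1\leq l\leq k$ (the lower bound because $S\neq T$ and $|S|=|T|$), and satisfy $\sum_{i\in S'}a_i=\sum_{j\in T'}a_j$. This contradicts the hypothesis on $X$, so $|X|<2k$.

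It remains to verify the counting inequality under the assumptions $n\geq 16$ and $k\geq\lg n$. From $k\geq\lg n$ we get $n\leq 2^k$, hence $k(n-1)+1\leq kn\leq k\,2^k$, so it suffices to prove
\[
\binom{2k}{k}>k\,2^k\qquad\text{for all integers }k\geq 4,
\]
noting that the hypotheses force $k\geq\lg n\geq\lg 16=4$. I would prove this by induction on $k$: the base case is $\binom{8}{4}=70>64=4\cdot 2^4$, and for the inductive step one uses $\binom{2k+2}{k+1}=\binom{2k}{k}\cdot\frac{2(2k+1)}{k+1}$ together with the elementary fact that $k\cdot\frac{2(2k+1)}{k+1}\geq 2(k+1)$, which reduces to $k^2-k-1\geq 0$ and thus holds for all $k\geq 2$; this shows $\binom{2k}{k}>k\,2^k$ implies $\binom{2k+2}{k+1}>(k+1)\,2^{k+1}$.

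This is the standard "equal-size, equal-sum subsets" pigeonhole that underlies twin arguments, so no genuine obstacle is expected; the only point requiring care is the numerical inequality $\binom{2k}{k}>k\,2^k$, whose boundary case $k=4$ (where $70>64$) works precisely because the hypotheses $n\geq 16$ and $k\geq\lg n$ keep us away from the range where it would fail, and monotonicity handles all larger $k$.
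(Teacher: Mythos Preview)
Your argument is correct and follows the same pigeonhole approach as the paper: both count $k$-element sums from a putative $2k$-element subfamily, compare $\binom{2k}{k}$ against the at most $k(n-1)+1$ possible values, and derive a contradiction. The only difference is cosmetic---where the paper invokes the Stirling-type estimate $\binom{2k}{k}\ge 2^{2k}/(2\sqrt{k})$ to finish, you prove $\binom{2k}{k}>k\,2^k$ for $k\ge 4$ directly by induction; your version also spells out explicitly the reduction from two equal-sum $k$-sets to two disjoint equal-sum $l$-sets, which the paper leaves implicit.
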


\begin{proof}Let $S$ be the set of all possible sums of $k$-element  sub-multi-sets of $X$. Since we assume that all of these are distinct, $|S|=\binom{|X|}{k}$.
On the other hand, there are at most $k(n-1)+1 < kn$ possible sums of $k$ elements from  $X$.
Thus, using a well-known consequence of Stirling's inequality,
$$ kn>|S|=\binom{|X|}{k}\geq\binom{2k}{k}\geq\frac{2^{2k}}{2\sqrt{k}} . $$
Consequently $n>2^{2k-1}/k^{3/2}$. This is a contradiction since $k\geq\lg n \geq 4$.
\end{proof}

Here, we say that two integers are \emph{almost equal} if they differ by $1, 0$, or $-1$.

\begin{thm}[Karolyi~\cite{karolyi}]\label{kar}
Let $X$ be a set of $m$ integers, each between $1$ and $2m-2$.
If $m\geq 89$, then one can partition $X$ into two sets, $X_1$ and $X_2$  of almost equal sizes such that
the sum of elements in $X_1$ is almost equal to the sum of elements in $X_2$.
\end{thm}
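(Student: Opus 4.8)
The plan is to treat this as a balanced two-coloring problem. First I would sort $X$ as $a_1 < a_2 < \cdots < a_m$ and record the consecutive differences $d_i = a_{i+1}-a_i \ge 1$. The hypotheses that the $a_i$ are distinct and lie in $[1,2m-2]$ give the crucial density estimate $\sum_{i=1}^{m-1}(d_i-1) = (a_m-a_1)-(m-1) \le (2m-3)-(m-1) = m-2$, so the sorted list has total ``gap excess'' at most $m-2$; in particular only boundedly many places can carry a large jump. The goal is to two-color the $a_i$ so that the two color classes have sizes differing by at most one and sums differing by at most one.

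The main mechanism I would use is pairing. When $m$ is even, partition the sorted sequence into the $m/2$ consecutive pairs $(a_1,a_2),(a_3,a_4),\dots$ and color exactly one element of each pair in each color; this forces both classes to have size $m/2$, so the size condition is automatic and swapping colors within a pair is the only freedom. The attainable values of (first sum) $-$ (second sum) are then exactly $\sum_i \pm \delta_i$, where $\delta_i$ is the gap inside the $i$-th pair. Reaching a value in $\{-1,0,1\}$ is a signed-balancing problem for the positive integers $\delta_i$: it is solvable as soon as, listing the $\delta_i$ in increasing order, none exceeds $1$ plus the sum of the smaller ones, since then the subset sums cover a full integer interval and one can hit $\lfloor(\sum_i\delta_i)/2\rfloor$, which makes the signed sum $\sum_i\delta_i \bmod 2 \in \{0,1\}$. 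For odd $m$ the same scheme applies after setting one element aside to absorb the size parity and color it last.

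The hard part is precisely the failure mode of this completeness condition, namely a single dominant within-pair gap, and here the density estimate is what saves the argument. A large $\delta_i$ corresponds to a large jump $d_j$ in the sorted order, and the bounded total excess $m-2$ forces all but a few gaps to be small, so only a few such jumps occur. I would therefore not pair blindly but choose the pairing so that every large jump falls \emph{between} two consecutive pairs rather than inside one: cut the sorted sequence at its large jumps into blocks of small internal gaps and pair within blocks. After this re-pairing every within-pair difference is small, the completeness condition holds, and the signed sum can be driven into $\{-1,0,1\}$.

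The remaining and genuinely delicate point is the bookkeeping of parities: a block of odd size cannot be paired entirely internally and forces either one cross-block pair (reintroducing a large difference) or a local size imbalance, and these must be reconciled with the global requirement that the two classes be almost equal. This is where the numerical threshold $m\geq 89$ enters: one needs enough small-gap elements and enough size slack to repair all the odd blocks at once, and verifying that the available slack always suffices is a finite but careful case analysis on the number and sizes of the large jumps. I expect this parity-reconciliation step, rather than the signed-balancing itself, to be the main obstacle, and I would organize it by first disposing of the generic case of few, small jumps and then handling the extremal configurations, where the interval constraint $a_m \le 2m-2$ pins down the set almost completely and makes a direct construction possible.
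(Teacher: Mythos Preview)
The paper does not prove this theorem at all: it is quoted verbatim as a result of K\'arolyi~\cite{karolyi} and used as a black box in the proof of Theorem~\ref{thm:exact}\ref{enum:large-odd-set}. So there is no ``paper's own proof'' to compare your attempt against.

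As for your sketch itself, the overall strategy --- sort, pair consecutively, and balance the signed within-pair differences --- is a natural and standard line of attack for problems of this type, and the density estimate $\sum(d_i-1)\le m-2$ is exactly the right leverage. But you have correctly identified, and then not resolved, the real difficulty: when the sorted list has one large jump, a block of odd length is forced, and re-pairing across that jump reintroduces a dominant $\delta_i$ that can break the interval-covering condition for the subset sums. Your final paragraph essentially promises a ``finite but careful case analysis'' without carrying it out, and that analysis is the entire content of the theorem --- the balancing lemma you invoke is trivial once the pairing has been arranged, so all the work sits in the step you defer. In short, this is a plausible plan rather than a proof; to turn it into one you would need to actually execute the parity-repair argument and show it always succeeds for $m\ge 89$, or consult K\'arolyi's paper for his treatment.
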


For positive integers $k,n$ ($k \leq n/2$), the {\em Kneser graph} $\mathcal{K}= KG(n,k)$ is the graph on the vertex set $\binom{[n]}{k}$ whose edge set consists of pairs of disjoint $k$-sets. Lov\'asz~\cite{lovasz} proved that the chromatic number of the Kneser graph $KG(n,k)$ is $n-2k+2$.
This result is a main tool for proving the lower bound on $t(n)$ used by Caro and Yuster~\cite{caroyuster}. Here we repeat it for completeness.

\begin{proof}[Outline of the  Proof of Theorem~\ref{general-bounds}]
For a graph $G$ with vertex set $[n]$,  we define a vertex coloring of $\mathcal{K}$
by letting the number of edges  induced by each $k$-subset of $V(G)$ be
the color of the corresponding vertex in $\mathcal{K}$.  The number of possible colors is most ${\binom{k}{2}}+1$.
For $k = \sqrt{n}$, we have ${\binom{k}{2}}+1<n-2k+2$. Thus the number of colors is less than the chromatic number of $\mathcal{K}$ and  there are two  {adjacent} vertices in $\mathcal{K}$ of the same color. Therefore there are two disjoint $k$-sets  of $G$ that induce the same number of edges.

To verify the upper bound, consider the disjoint union of cliques of odd orders $a_1, \ldots, a_m$, where $a_j>2(a_1^2+a_2^2+\cdots+a_{j-1}^2)$.
One can show that any pair of twins in this graph must omit at least $c\lg\lg n$ vertices for some constant $c$.
\end{proof}

\section{Proof of Theorem~\ref{thm:almosttwins}}
\label{sec:almosttwins}

Set  $k=\lg n$ and $G=(V,E)$. We shall find two large vertex sets $A$ and $B$ of $G$ of equal size and small discrepancy following the following procedure.\\

\noindent
\textbf{Step 1.} Choose disjoint sets $A_1,B_1\subseteq V$ such that $|A_1|=|B_1|\leq k$ and $d(A_1)=d(B_1)$.\\
\indent By Lemma~\ref{fixed-k} this is possible.

\noindent
\textbf{Step  $i$,  $i>1$.} Choose disjoint sets $A_i,B_i\subseteq V\setminus(A_1\cup B_1\cup\cdots\cup A_{i-1}\cup B_{i-1})$ such that $|A_i|=|B_i|\leq k$ and $d(A_i)=d(B_i)$.\\
\indent Again, by Lemma~\ref{fixed-k} this is possible as long as $|V\setminus (A_1\cup B_1\cup\cdots\cup A_{i-1}\cup B_{i-1})|\geq 2k$.\\

Assume that we have to stop after step $q$ and let $S=V\setminus (A_1\cup\cdots\cup A_q\cup B_1\cup\ldots\cup B_q)$ be the set of leftover vertices. So, $|S|<2k$. Consider the graph $G'=G-S$. Since $d(G,A_i)= d(G,B_i)$ and $V(G)\setminus V(G')=S$, we have that $|d(G',A_i)-d(G',B_i)|=|d(G,A_i)+e(A_i,S)-d(G,B_i)-e(B_i,S)|$. Thus,
$$ |d(G',A_i)-d(G',B_i)|\leq |d(G,A_i)-d(G,B_i)|+|e(A_i,S)-e(B_i,S)|=|e(A_i,S)-e(B_i,S)|\leq |A_i||S|\leq 2k^2.$$

Denoting $\epsilon_i =|d(G',A_i)-d(G',B_i)|$, we have that $\epsilon_i \leq |A_i||S|\leq 2k^2$.  Lemma~\ref{degree-pairs} applied to $G'$ and $A_1,\ldots,A_q,B_1,\ldots,B_q$ asserts the existence of disjoint vertex sets $A$, $B$ in $G$, $|A| = |B| = \sum_{i=1}^q |A_i|$, such that ${\rm disc}(A,B) \leq 2k^2 =2\lg^2 n$.\\

Next, we shall find two vertex sets $A$ and $B$ of $G$ such that $|A|=|B|=\lfloor n/2\rfloor$ and ${\rm disc}(A,B) \leq \frac{1}{2}(\Delta(G) - \delta(G) + 1)$. Let $G' = G$ if $n$ is even, and $G' = G \setminus v$ if $n$ is odd, where $v$ is any vertex of $G$. Thus $G'$ has $2 \lfloor n/2\rfloor$ vertices and $\Delta(G') - \delta(G') \leq \Delta(G) - \delta(G) + 1$. Let $A$, $B$ be two disjoint vertex sets in $G'$ with $|A|=|B|=\lfloor n/2\rfloor$ and minimum discrepancy. It remains to show that ${\rm disc}(A,B) \leq \frac{1}{2}(\Delta(G') - \delta(G'))$.

If $\Delta(G') = \delta(G')$, i.e.  $G'$ is regular,  then ${\rm disc}(A,B) = 0$ by Lemma~\ref{degree-pairs}. Hence we may assume $\Delta(G') - \delta(G') > 0$. Assume, without loss of generality, that ${\rm disc}(A,B) = e(A) - e(B)$. Since $\sum_{a \in A}d(G',a) = 2e(A) + e(A,B) > 2e(B) + e(A,B) = \sum_{b \in B} d(G',b)$ there is a vertex $a \in A$ and a vertex $b \in B$ with $d(G',a) > d(G',b)$. For $A' = (A \setminus a) \cup b$ and $B' = (B \setminus b) \cup a$ we have
$$e(A') - e(B') = e(A) - d(G',a) - e(B) + d(G',b) = {\rm disc}(A,B) - d(G',a) + d(G',b)< e(A)-e(B). $$
Due to the minimality of ${\rm disc}(A,B)$ it follows ${\rm disc}(A',B') = e(B') - e(A')$ and thus
$$ 2{\rm disc}(A,B) \leq {\rm disc}(A',B') + {\rm disc}(A,B) = d(G',a) - d(G',b) \leq \Delta(G') - \delta(G'),$$
which implies ${\rm disc}(A,B) \leq \frac{1}{2}(\Delta(G')-\delta(G'))$ as desired. \hfill $\Box$

\section{Proof of Theorem~\ref{thm:size}}
\label{sec:size}

This argument is very similar to the one used by Caro and Yuster. The only difference is that we are using stronger discrepancy results.
We shall remove a few vertices of large degree. In the remaining graph we find two sets $T$ and $S$,  such that   there are no edges between $T$ and
$S$,   $S$ is large enough, and $T$   induces a matching and independent vertices.  Finding ``almost'' twins guaranteed by Theorem~\ref{thm:almosttwins} in
$G[S]$, and using the edges and vertices of $T$ to balance those, gives us twins in $G$.\\

Let $G$ be an $n$-vertex graph on  $e$ edges.  We shall find large twins $A$ and $B$ of size at least  $\frac{n}{2} \left( 1- \frac{20\sqrt{e} \lg n}{n}\right)$ each.
To shorten the expressions we use $f = f(n,e) =  \frac{\sqrt{e} \lg n}{n}$, so we are looking for twins of size at least $\frac{n}{2} (1-20f)$ each. In particular we may assume that $n(1-20f) > 0$.
Assume that the size of the largest independent set in $G$ is less than $n(1- 20f)$, otherwise we find the desired twins as subsets of this independent set.
Assume also that  $e\geq 4$. \\

Consider the set $L= L(x)$ of vertices of degree at least $x= \frac{2e}{nf}$.
The number of edges incident to these vertices is at least $ |L|x/ 2$.
Since this number is at most the total number $e$ of edges,  $|L|\leq  \frac{2e}{x} = nf$.
Let $G'=G-L$, a graph with maximum degree less than $x$ and number of vertices at least $n(1-f)$.
Next we shall consider $G'$ only.

Let $l = 2\frac{(nf)^2}{e} = 2\lg^2 n$. For a vertex $v$, denote by $N[v]$ the set of vertices consisting of $v$ together with its neighbors in $G'$. For an edge $e_0=uv$, denote by $N[e_0]$ the set of vertices $\{u,v\}$ together with each of their neighbors in $G'$. We shall choose a set $T$ of $4l$ vertices from $V=V(G')$ inducing a matching $e_1, \ldots, e_l$ and $2l$ independent vertices $v_1, \ldots, v_{2l}$
 via a simple greedy procedure:\\

 \noindent
{\bf Step 1.}  Pick an edge $e_1$ with endpoints in $V$.\\
  \indent Since $|V| \geq n(1 - f)$, $V$ can not be an independent set, so the edge $e_1$ exists.\\
{\bf Step $i$, $1<i\leq l$.}  Pick an edge $e_i$ with endpoints in $V \setminus (N[e_1] \cup \cdots \cup  N[e_{i-1}])$.\\
  \indent  Since the size of  $V \setminus (N[e_1] \cup \cdots \cup N[e_{i-1}])$ is greater than $n(1-f) - 2(i-1)x > n(1-f) - 2lx = n(1-f) - 2\cdot 2\frac{(nf)^2}{e}\frac{2e}{nf} = n(1-9f) > 0$, this set is not an independent set and an edge $e_i$ can be selected for $i=1,\ldots,l$.\\

\noindent
{\bf Step $l+1$.}  Pick a vertex $v_1$ from $V \setminus (N[e_1] \cup \cdots \cup N[e_l])$.\\
{\bf Step $l+j$, $1<j\leq 2l$.}  Pick a vertex $v_j$ from $V \setminus (N[e_1] \cup \cdots \cup N[e_l] \cup N[v_1] \cup \cdots \cup N[v_{j-1}])$.\\
  \indent The fact that  this set is non-empty for each $j$ follows from the argumentation below.

Let $S$ be the last set $S= V \setminus (N[e_1] \cup \cdots \cup N[e_l] \cup N[v_1] \cup \cdots \cup N[v_{2l}])$.  Then $|S| >  n(1-f) - 4lx = n(1-17f) > 0$. Moreover no vertex of $S$ is adjacent to any vertex of $T$. Applying Theorem~\ref{thm:almosttwins} to $G[S]$ gives two disjoint subsets of $S$, $A$ and $B$, of the same size at  least $(|S|-2\lg |S|)/2$ and with discrepancy $\gamma$, $\gamma \leq 2\lg^2(|S|) \leq 2\lg^2 n \leq l$. Assume, without loss of generality, that $e(A)\geq e(B)$. We construct a set $A'$ by adding $2\gamma \leq 2l$ independent vertices from $T$ to $A$, and a set $B'$ by adding $\gamma \leq l$ edges from $T$ to $B$. Now, the discrepancy of $A'$ and $B'$ in $G$ is zero since there are no edges between $A\cup B$ and $T$. Thus $A'$ and $B'$ are twins in $G$ of size at least $(|S| - 2\lg |S|)/2 > \frac{n}{2}(1-17f) - \lg n = \frac{n}{2}(1-17f) - \frac{nf}{\sqrt{e}} \geq \frac{n}{2}(1-19f)$. \hfill $\Box$

\section{Proof of Theorem~\ref{thm:exact}}
\label{sec:exact}

 {Let $G$ be a graph on $n$ vertices, where $n$ is even. We shall show that if one of conditions~\ref{enum:consecutive-degrees}-\ref{enum:disjoint-consecutive-pairs} holds then $t(G)=n/2$, i.e., the vertex set of $G$ can be perfectly split into twins.}

 \begin{enumerate}
  \item[\ref{enum:consecutive-degrees}.]  {Assume that the degrees of $G$ form a set of consecutive integers. That is, for every integer $i$, $\delta(G)\leq i\leq\Delta(G)$, there exists a vertex of degree $i$.} Let $A$ and $B$ be sets that have equal cardinality that partition the vertex set and have the smallest discrepancy. If the discrepancy is nonzero, assume that $e(A)<e(B)$. In particular, we see that if $A$ has a vertex of degree $d$ then $B$ does not have a vertex of degree $d+1$ (otherwise swapping the two will give a partition with smaller discrepancy). Similarly, if $B$ has a vertex of degree $d$, then $A$ has no vertex of degree $d-1$. So, if $d$ is the smallest degree of a vertex in $A$ then $A$ contains all vertices of degrees $d+1, d+2, \ldots, \Delta(G)$. Similarly, if $d'$ is the highest degree of a vertex of $B$, then $B$ contains all vertices of degree $d'-1, d'-2, \ldots, \delta(G)$. Moreover $d'\leq d$. So, we have that the sum of the degrees in $A$ is strictly greater than the sum of
the degrees in $B$, a contradiction to the assumption that $e(A)<e(B)$.

  \item[\ref{enum:even-vertex-sets}.]  {Assume that the number of vertices of degree $i$ is even for each integer $i$, $\delta(G)\leq i\leq\Delta(G)$.} Observe using Lemma~\ref{degree-pairs} that it is sufficient to partition the vertex set into two parts of equal sizes such that the degrees of vertices in each part have the same sum. Since each degree appears an even number of times, we put half of the corresponding vertices in one part and half in another.

  \item[\ref{enum:large-odd-set}.]  {Assume that $n\geq 90$ and $|\{i: |V_i|\, \text{ is odd}\}|>n/2$.} For each degree $d$, put $\lfloor d/2 \rfloor$ vertices of degree $d$ in one part and $\lfloor d/2 \rfloor$ vertices of degree $d$ in another part. Let $S$ be the set of remaining vertices of distinct degrees. We shall split $S$ also using Theorem~\ref{kar}. Formally, for each degree $d$, let $V_d = A_d \cup B_d \cup S_d$, where $|A_d|=|B_d|$ and $|S_d| \leq 1$. Then $S = S_{\delta}\cup \cdots \cup S_{\Delta}$ is an even set of more than $n/2$ vertices of distinct degrees. Applying Theorem~\ref{kar} to the degrees of vertices in $S$, we can split $S$ in two parts, $U$ and $U'$ of equal sizes such that $|d(U)-d(U')|\leq 1$. Let $A = U\cup  \bigcup_{d=\delta}^{\Delta} A_d$ and $B = U'\cup \bigcup_{d=\delta}^{\Delta} B_d$. Since $d(V)$ is even and $d\left(\bigcup_{d=\delta}^{\Delta} A_d\right) = d\left(\bigcup_{d=\delta}^{\Delta} B_d\right)$, then $d(U)$ and $d(U')$ must have the same parity, and therefore
must be equal. So it must be the case that ${\rm disc}(A, B) =0$.

  \item[\ref{enum:disjoint-consecutive-pairs}.] {Assume that there are least $\Delta-\delta$ disjoint pairs of vertices whose degrees differ by exactly $1$.} With $x=\Delta-\delta$, there is a set of vertices $\{v_1,v_1',\ldots,v_x,v'_x\}$ such that $d(v_i)=d(v'_i)-1$. Partition $V(G) - \{v_1,v'_1,\ldots,v_x,v'_x\}$ into two sets $A$ and $B$, of equal cardinality with $0\leq d(A) - d(B) = x'$, $x'\leq x$.  Note that this could be done greedily
  by ordering the vertices in the non-increasing order of their degrees and alternately adding one vertex to $A$ and the next to $B$.
  Let $A'=A \cup \{v_1,\ldots,v_{x'}\}$ and $B'=B \cup \{v'_1,\ldots,v'_{x'}\}$. As a result, $d(A') = d(B')$. If $x'\neq x$, let $A''= A'\cup \{v_i: x\geq i>x', i \mbox{  is  even}\} \cup \{v'_i: x\geq i>x', i \mbox{  is  odd}\}$ and $B''= B' \cup \{v'_i: x\geq i>x', i \mbox{  is  even}\} \cup \{v_i: x\geq i>x', i \mbox{  is  odd}\}$. Consequently, $|d(A'') -d(B'')|\leq 1$. However, since the sum of the degrees of a graph is even and $V(G)= A''\cup B''$, we have that $d(A'')=d(B'')$. Therefore $A''$ and $B''$ are twins of size $n/2$ each. We note that this technique is
reminiscent of the one used by Ben-Eliezer and Krivelevich in~\cite{BK}.
 \end{enumerate}~ \hfill $\Box$

\section{Proof of Theorem~\ref{thm:forest}}
\label{sec:forest}

First, we observe that we may assume that no component of our forest consists of any isolated edges.  If it does, remove those edges and find the twins in the remaining graph and delete $0$, $1$ or $2$ vertices to get twins $A$, $B$.  We can add the endvertices of each isolated edge the twins -- one to $A$ and one to $B$  -- to obtain twins for the whole forest, apart from the at most $2$ vertices that were already deleted.

In order to proceed, we need to define a type of twins with additional structure. Let $G=(V, E)$ be a forest and $A$, $B$ be twins.  We say that $A\cup B$ is the set of \emph{colored} vertices and $G[A\cup B]$ is the \emph{colored graph}; the remaining vertices are \emph{uncolored}.
We define $A$ and $B$ to be \textbf{good twins} if
\begin{itemize}
 \item{}  each uncolored vertex has degree at most $1$ in $G$,
 \item{}  the set of colored vertices induces no isolated vertices,
 \item{}  all colored vertices except for a set $S\subseteq A$ of at  most two vertices  have all neighbors colored,
 \item{}  $S$ contains at most one leaf and at most one non-leaf of the colored graph,
 \item{}  if $S$ contains a non-leaf, $w$,  of the colored graph, then $w$ has at most one neighbor in $A$,
 \item{}  if  $S$ contains a non-leaf $w$ and  a leaf $v$ of the colored graph,     then $v$ has smaller number of uncolored neighbors than $w$.
\end{itemize}

Because each uncolored vertex has degree at most $1$ and there are no isolated edges, if $A$,$B$ are good twins then at least one vertex of each edge is colored. Therefore, the set of edges that have exactly one colored vertex come in at most two disjoint stars, each centered at a vertex in $S\subseteq A$. We refer to the middle and the right of Figure~\ref{fig:tree-example} for examples of good twins. The figure exemplifies one step in the inductive proof of existence of good twins (Theorem~\ref{thm:good_twins}).

\begin{figure}[htbp]
 \includegraphics{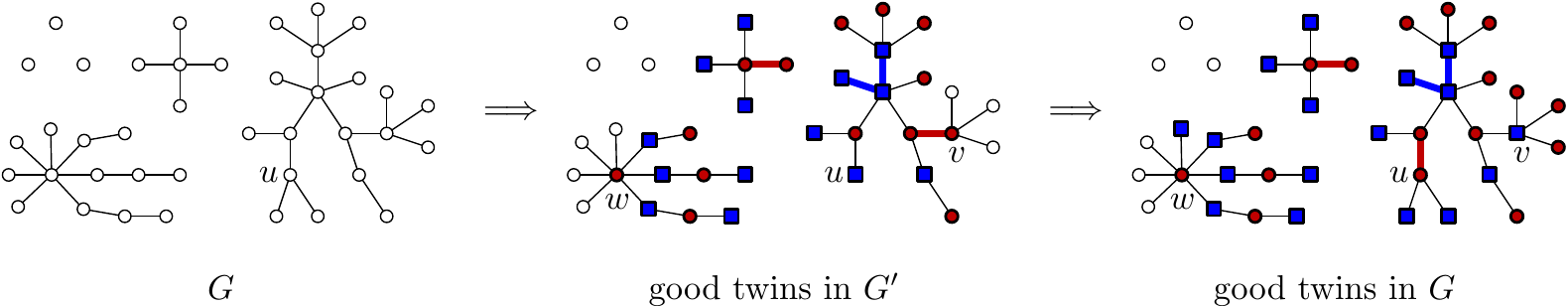}
 \caption{Illustration of the proof of Theorem~\ref{thm:good_twins}. Left: A forest $G$ and a vertex $u$ adjacent to at least one leaf and at most one non-leaf. Middle: Good twins in the graph $G'$ obtained from $G$ by deleting all leaves adjacent to $u$. Right: Good twins in $G$ obtained by recoloring $u$ and $v$ and applying a $(u,v)$-move followed by a $(w,v)$-move -- c.f. Case~3.1.b. We use red circles to indicate members of $A$ and blue squares to indicate members of $B$.}
 \label{fig:tree-example}
\end{figure}

\begin{theorem}\label{thm:good_twins}
In every forest $G$ there exist good twins.
\end{theorem}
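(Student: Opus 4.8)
The plan is to prove Theorem~\ref{thm:good_twins} by induction on $|V(G)|$, with the key reduction step peeling off a carefully chosen vertex together with all of its leaf-neighbors. First I would dispose of the base cases and the easy structural reductions: if $G$ has an isolated vertex, recolor nothing and note the trivial good twins on the rest extend; if $G$ is a disjoint union of smaller forests we apply induction to a component, although some care is needed since $S$ must lie in a single part, so really the induction should be on the whole forest at once rather than component-wise. So the cleaner setup is: pick a vertex $u$ that is adjacent to at least one leaf and to at most one non-leaf — such a $u$ exists because a longest path in any non-trivial component has a penultimate vertex with this property, and if no such path exists the forest is a union of stars and isolated vertices, which can be handled directly. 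Let $G'$ be obtained from $G$ by deleting all leaves adjacent to $u$, and apply the inductive hypothesis to $G'$ to get good twins $A', B'$ with exceptional set $S' \subseteq A'$.

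The heart of the argument is a case analysis on the status of $u$ in the colored graph of $G'$ and on the structure of $S'$, and this is where I expect the real work to be. The operations available are what the figure calls $(u,v)$-moves: recoloring an uncolored leaf $v$ of $u$ and adding $u$ to the colored set (if it was uncolored), or moving $u$ between $A$ and $B$, always tracking the degree/edge bookkeeping via Lemma~\ref{degree-pairs}-style balancing. Concretely: if $u$ is already colored in $G'$ with all neighbors colored, then its deleted leaves in $G$ are new uncolored degree-$1$ vertices and $A',B'$ remain good for $G$ (possibly after checking the ``$S$ contains a non-leaf'' clause, since $u$ may now be a non-leaf with uncolored neighbors — this forces us to put $u$ into $S$ and verify the leaf/non-leaf and neighbor-count conditions). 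If $u$ is uncolored in $G'$ it has degree at most $1$ there, so it has at most one colored neighbor $w$; we then recolor $u$ and one of its leaves $v$ in $G$, placing them in opposite parts, which adds one edge $uv$ to the colored graph on each side's count in a controlled way and may shift the exceptional vertex from $w$ to $u$ via a $(w,v)$-move as in Case~3.1.b of the figure. The bookkeeping to maintain is that each move changes $e(A)-e(B)$ by a predictable amount, so we can always reach discrepancy $0$; the subtle invariants are the last three bullets of the definition of good twins, which are exactly engineered so that the at-most-two exceptional vertices and their star-edges can be rebalanced.

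The main obstacle, then, is not any single calculation but organizing the case analysis so that every configuration of $(\,$status of $u$, size and content of $S'$, whether $u$'s unique $G'$-neighbor is in $S'$, parity of the number of $u$'s leaves$)$ is covered and the six good-twin invariants are re-established in each. In particular the clause forcing a non-leaf in $S$ to have at most one neighbor in $A$, and the clause comparing uncolored-neighbor counts of the leaf and non-leaf in $S$, must be preserved; I would handle these by always choosing, when $u$ becomes the new non-leaf exception, to color all but one of its leaves on alternating sides and leave exactly one uncolored, and by swapping the roles of the old and new exceptional vertices whenever the uncolored-neighbor inequality would be violated. Once Theorem~\ref{thm:good_twins} is established, Theorem~\ref{thm:forest} follows immediately: good twins on a forest leave uncolored only the at-most-two vertices of $S$'s associated stars that fail to be colored — more precisely, the colored graph misses at most the $|S|\le 2$ star centers' worth of imbalance — and a short count shows $|A|=|B|\ge \lceil n/2\rceil - 1$, with the isolated-edge reduction at the start of the section absorbing the remaining slack.
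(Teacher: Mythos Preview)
Your inductive skeleton matches the paper's exactly: pick a vertex $u$ adjacent to at least one leaf and at most one non-leaf, delete its leaf-neighbors to form $G'$, apply induction, then repair via a case analysis on $|S'|\in\{0,1,2\}$ crossed with whether $u\in A$, $u\in B$, or $u\notin A\cup B$. So the strategy is right.

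The genuine gap is your description of the $(x,y)$-move, which is the engine of the repair step. The paper's move is \emph{not} ``recoloring an uncolored leaf $v$ of $u$ and adding $u$ to the colored set'' nor ``moving $u$ between $A$ and $B$''; it is: given $x\in X$, $y\in Y$, put $\min(|L(x)|,|L(y)|)$ uncolored leaves of $x$ into $Y$ and the same number of uncolored leaves of $y$ into $X$. This is balanced by construction (both $|X|=|Y|$ and $e(X)=e(Y)$ are preserved, since each side gains the same number of pendant edges), and it guarantees that afterward at least one of $x,y$ has all neighbors colored --- shrinking the exceptional set by one. Your proposed operations do not preserve twin status in general, and the appeal to ``Lemma~\ref{degree-pairs}-style balancing'' is misplaced, since that lemma requires $A\cup B=V(G)$, which fails here. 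Without the correct move the subcase analysis (which in the paper further splits on the relative sizes of $L(u),L(v),L(w)$) cannot be carried out as you sketch; in particular your suggestion to ``color all but one of $u$'s leaves on alternating sides'' does not keep $e(A)=e(B)$.

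A side remark: your last paragraph is also wrong. Good twins may leave arbitrarily many vertices uncolored (isolated vertices plus the leaves hanging off the one or two vertices of $S$), not ``at most two''. Deducing Theorem~\ref{thm:forest} from Theorem~\ref{thm:good_twins} still requires the short additional argument in the paper that redistributes these uncolored vertices into $A$ and $B$ after possibly uncoloring $w$.
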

\begin{proof}

Let $G = (V,E)$ be a forest. We do induction on the number of edges of $G$. If $|E| = 0$, then $G$ consists of isolated vertices only and good twins of $G$ are given by $A = B = \emptyset$.

For the induction step let $u$ be any vertex in $G$ that is adjacent to at least one leaf of $G$ and to at most one non-leaf of $G$. We apply induction to the subgraph $G'$ of $G$ obtained by deleting all leaves adjacent to $u$ and obtain good twins $A$, $B$ of $G'$. Note that $u$ is either a leaf or an isolated vertex of $G'$.

What follows is a case analysis. In each case we will argue how to modify the good twins $A$, $B$ of $G'$ so as to obtain good twins of $G$. The modification of twins in each case is done in one or two steps called \textit{moves} that are defined for a pair of vertices.

Let $X$ and $Y$ be two sets of vertices, $x\in X$, $y\in Y$, for any vertex $v$, $L(v) = N(v) \setminus (X \cup Y)$. If $L(x)$ and $L(y)$ are disjoint, then an \textbf{$(x,y)$-move with respect to $X,Y$} (or an \textbf{$(x,y)$-move}, where the sets $X$ and $Y$ are understood) is an operation creating a pair of sets $X',Y'$ as follows:

\begin{itemize}
  \item Let $L'(x) \subseteq L(x)$ and $L'(y) \subseteq L(y)$ be sets with $|L'(x)| = |L'(y)| = \min\{|L(x)|,|L(y)|\}$.
  \item Let $X' = X\cup L'(y)$ and $Y' = Y\cup L'(x)$.
\end{itemize}

Note that if $X$ and $Y$ are twins and $L(x)$ and $L(y)$ contain only leaves of $G$, the $(x,y)$-move creates twins again. Moreover, all neighbors of at least one of $x,y$ are contained in the new twins $X' \cup Y'$. See Figure~\ref{fig:xy-move} for an illustration. We shall see an $(x,y)$-move as a map $f$ that takes two sets of vertices $X$, $Y$ and two vertices $x \in X$, $y \in Y$ and maps these to $X'$, $Y'$, defined as above, i.e., $f(x,y,X,Y) = (X',Y')$.

\begin{figure}[htbp]
 \includegraphics{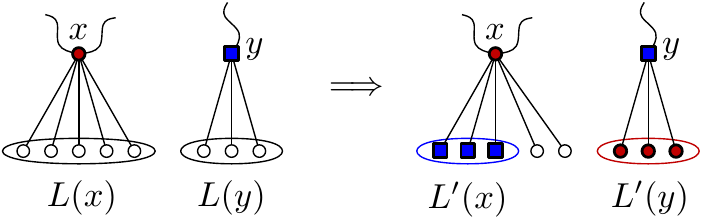}
 \caption{An $(x,y)$-move with respect to the sets $X$ of red vertices and the set $Y$ of blue vertices.}
 \label{fig:xy-move}
\end{figure}


Let $S \subseteq A$, $|S| \leq 2$, be the set of those vertices in $G'$ that have uncolored neighbors. Then in $G$ the set of non-leaf vertices with uncolored neighbors is given by $S \cup u$. We distinguish the cases $|S|=0$ (Case~1), $|S|=1$ (Case~2) and $|S|=2$ (Case~3), and the subcases $u \in A$, $u \in B$ and $u \notin A \cup B$. We apply $(x,y)$-moves to twins only, with $\{x,y\} \subseteq S \cup u$, each time decreasing the number of vertices that have uncolored neighbors by at least one. In most of the cases we end up with at most one vertex $x$ with uncolored neighbors, and to see that these twins are good twins it then suffices to check that $x$ is incident to at most one edge in $G[A]$ or $G[B]$. To be precise, we may have to swap the roles of $A$ and $B$ in the case where $x \in B$. However, due to simplicity of presentation we omit to mention this explicitly.

In many cases we do not apply $(x,y)$-moves immediately. Instead, we first color some uncolored vertices and/or recolor some leaves of the colored graph. In the first case, we always add to $A$ some vertex that has no neighbor in $A$ and to $B$ some  vertex that has no neighbor in $B$, i.e., we get twins again. If we recolor a leaf $x$, say $x \in A$, in the colored graph this has the following effects: $|A \setminus x| = |B \cup x| - 2$ and $e(A \setminus x) = e(B \cup x) - 1$. We repair this imbalance by either recoloring a leaf $y\in B$ in the colored graph or adding two uncolored vertices to $A$, one with no neighbor in $A$ and the other with exactly one neighbor in $A$. See Figure~\ref{fig:case2_4} and~\ref{fig:case2_5} for an example.

 \begin{itemize}
  \item \textit{Case 1 -- $|S|=0$:} If $u \notin A \cup B$, then $u$ is an isolated vertex of $G'$ and good twins of $G$ are given by $A \cup \{u\}$, $B \cup \{x\}$ for some vertex $x \in L(u)$. If $u \in A \cup B$ then $A$, $B$ are already good twins for $G$. In both cases, the new set $S$ is $\{u\}$.

\medskip

 \begin{figure}[htbp]
   \begin{center}
    \subfigure[Case~2.1 -- $u \in B$:\newline $(A',B') = f(w,u,A,B)$]{
     \includegraphics{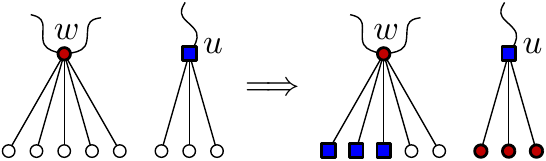}
     \label{fig:case2_1}
    }
    \\
    \hspace{2em}
    \subfigure[Case~2.2.a -- $u \in L(w)$:\newline $(A',B') = f(w,u,A \cup x,B \cup u)$]{
     \includegraphics{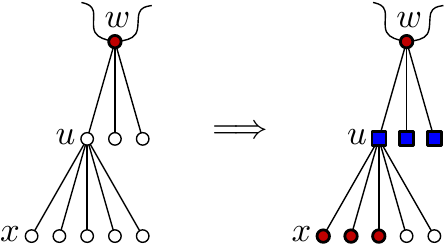}
     \label{fig:case2_2}
    }
    \hspace{2em}
    \subfigure[Case~2.2.b -- $u$ has no colored neighbor:\newline $(A',B') = f(w,u,A \cup x, B \cup u)$]{
     \includegraphics{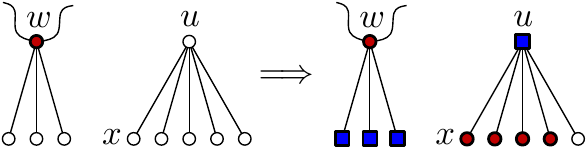}
     \label{fig:case2_3}
    }
    \hspace{2em}
    \subfigure[Case~2.3.a -- $u \in A$, $|L(w)| \leq |L(u)|$ and its colored neighbor is in $A$:\newline $(A',B')=f(w,u,(A \setminus u)\cup \{x,y\}, B\cup u)$]{
     \includegraphics{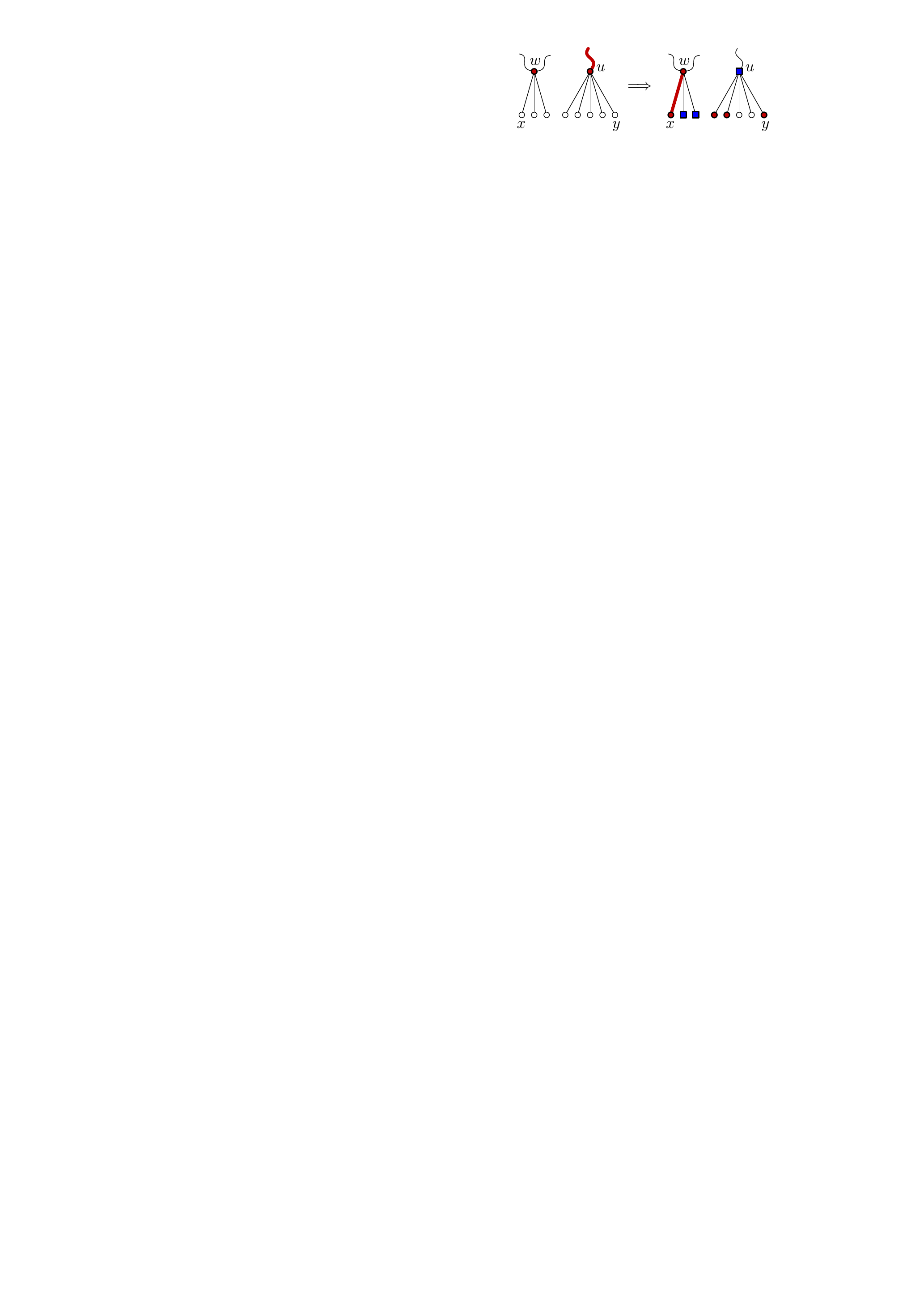}
     \label{fig:case2_4}
    }
    \hspace{2em}
    \subfigure[Case~2.3.b -- $u \in A$, $|L(w)| \leq |L(u)|$ and its colored neighbor is in $B$:\newline $(A',B')=f(w,u,(A \setminus u) \cup \{x,y\}, B\cup u)$]{
     \includegraphics{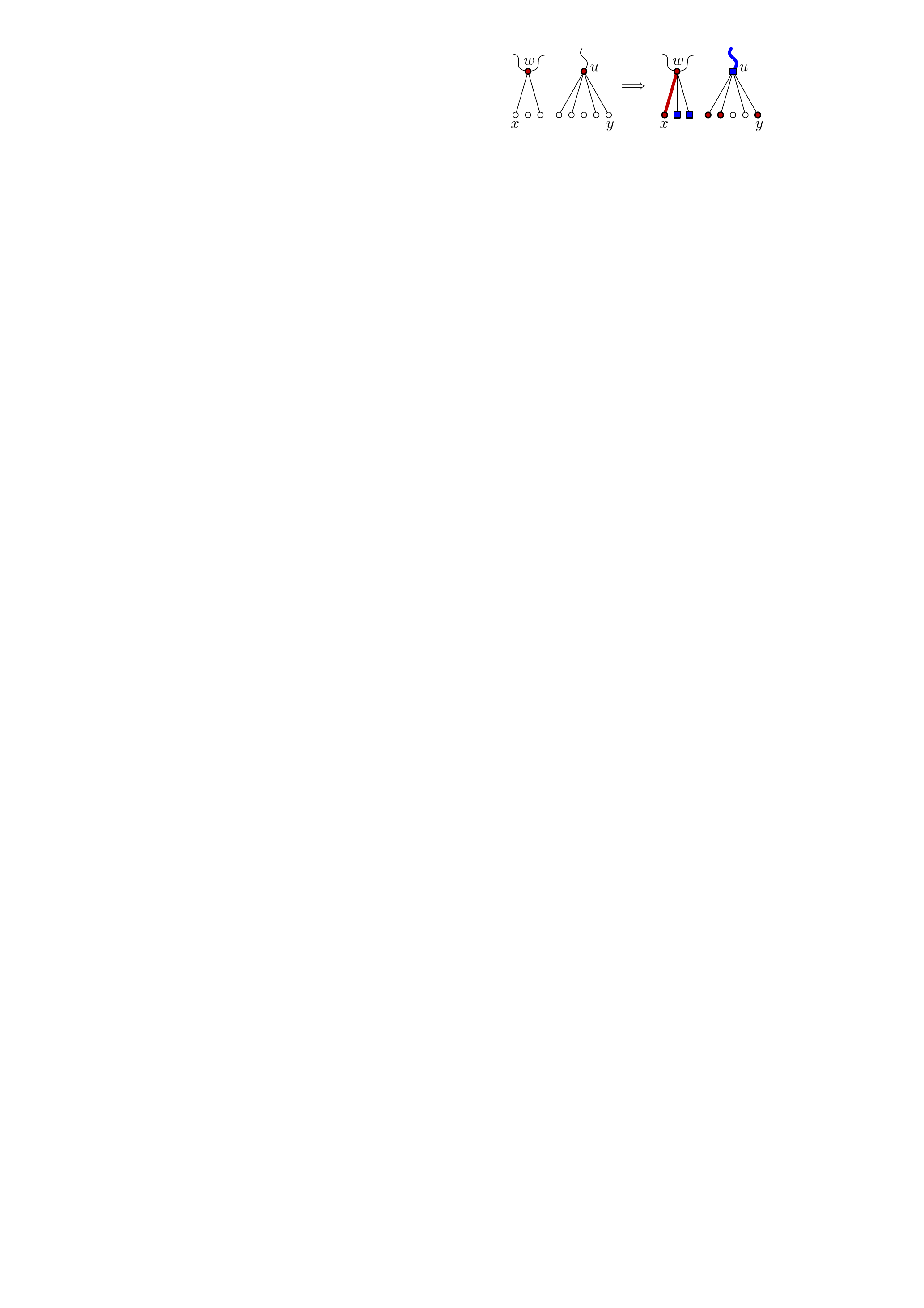}
     \label{fig:case2_5}
    }
   \end{center}
   \caption{Illustration of Case~2 in the proof of Theorem~\ref{thm:good_twins}.}
   \label{fig:case2}
  \end{figure}

  \item \textit{Case 2 -- $|S|=1$:} Let $S = \{w\} \subseteq A$. We may assume, without loss of generality, that $w$ is a non-leaf of $G[A\cup B]$. Next, we distinguish several cases depending on whether $u \in A$, $u \in B$, or $u \notin A \cup B$. Figure~\ref{fig:case2} shows for each case how to obtain good twins for $G$ by (re)coloring some vertices and applying a $(w,u)$-move. The procedure is formally described as follows: \smallskip

      \begin{itemize}
        \item \textit{Case~2.1-- $u \in B$:} We perform a single $(w,u)$-move. \smallskip
        \item \textit{Case~2.2-- $u\not\in A\cup B$:} Either $u \in L(w)$ or $u$ was isolated in $G'$. In either case, we first add $u$ to $B$ and any $x \in L(u)$ to $A$. We obtain twins again, to which we perform a $(w,u)$-move. If $|L(w)|>|L(u)|$, then the set $S$ remains $\{w\}$. If $|L(w)|<|L(u)|$, then the roles of $A$ and $B$ are switched and $S$ becomes $\{u\}$. If $|L(w)|=|L(u)|$, then $S=\emptyset$. \smallskip
        \item \textit{Case~2.3 -- $u \in A$:} If $w$ is a non-leaf of $G'[A\cup B]$ and $|L(w)| > |L(u)|$, then $A$, $B$ are already good twins of $G$ and the new set $S$ is $\{w,u\}$. In fact, we may assume that $|L(w)| \leq |L(u)|$, since if $w$ is a leaf of $G'[A\cup B]$ and $|L(w)| > |L(u)|$, we can swap the roles of $w$ and $u$. First, we recolor $u$ (remove it from $A$ and add it to $B$), add some $x \in L(w)$ as well as some $y \in L(u)$ to $A$, and perform a $(w,u)$-move to the resulting twins. Figure~\ref{fig:case2_4} and~\ref{fig:case2_5} depicts the case that the colored neighbor of $u$ is in $A$ and $B$, respectively.\smallskip
      \end{itemize}
In Case~3 below when we recolor a vertex $x$ we do not distinguish the subcases whether the colored neighbor of $x$ is in $A$ or $B$, since the treatment is the same. We indicate in Figure~\ref{fig:case3} the first subcase only.

\medskip
 \begin{figure}[htbp]
   \begin{center}
    \subfigure[Case~3.1.a -- $u \in B$ and $|L(u)| \geq |L(v)|$:\newline $(A',B') = f(w,u,f(v,u,A,B))$]{
     \includegraphics{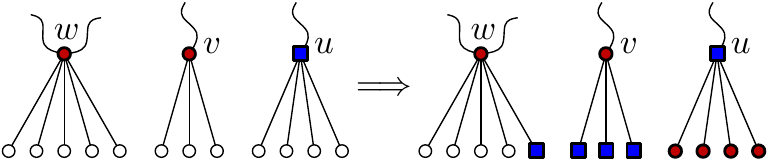}
     \label{fig:case3_1A}
    }
    \hspace{1em}
    \subfigure[Case~3.1.b -- $u \in B$ and $|L(u)| < |L(v)|$:\newline $(A',B') = f(w,v,f(u,v,(A \setminus v) \cup u, (B \setminus u) \cup v))$]{
     \includegraphics{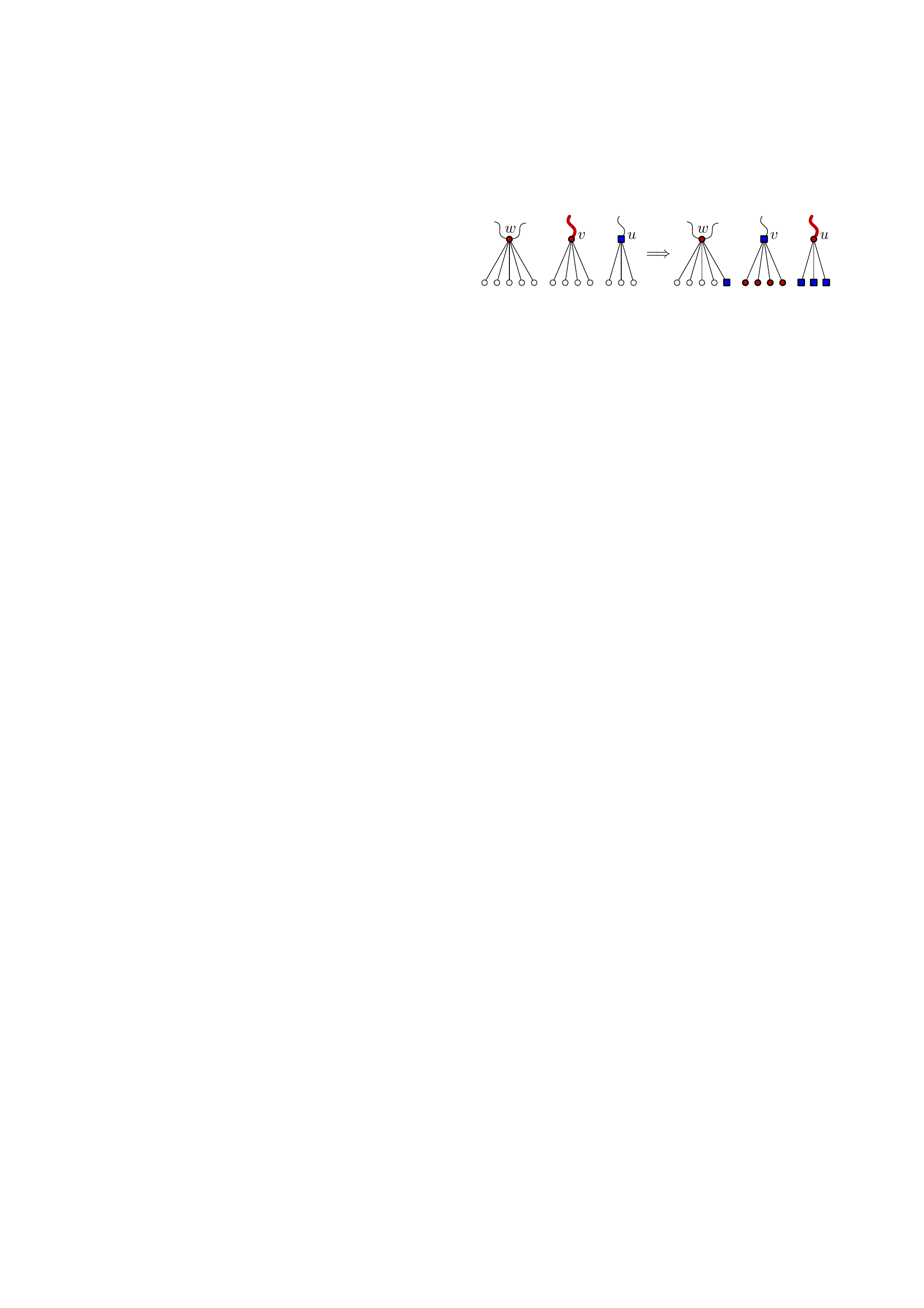}
     \label{fig:case3_1B}
    }
    \hspace{2em}
    \subfigure[Case~3.2.a -- $u \in L(w)$, $|L(w)| \leq |L(u)| + |L(v)|$:\newline $(A',B') = f(w,v,f(w,u,(A \setminus v) \cup \{x,y,z\}, B \cup \{u,v\}))$\newline(The same is done if $u$ has no colored neighbor.)]{
     \includegraphics{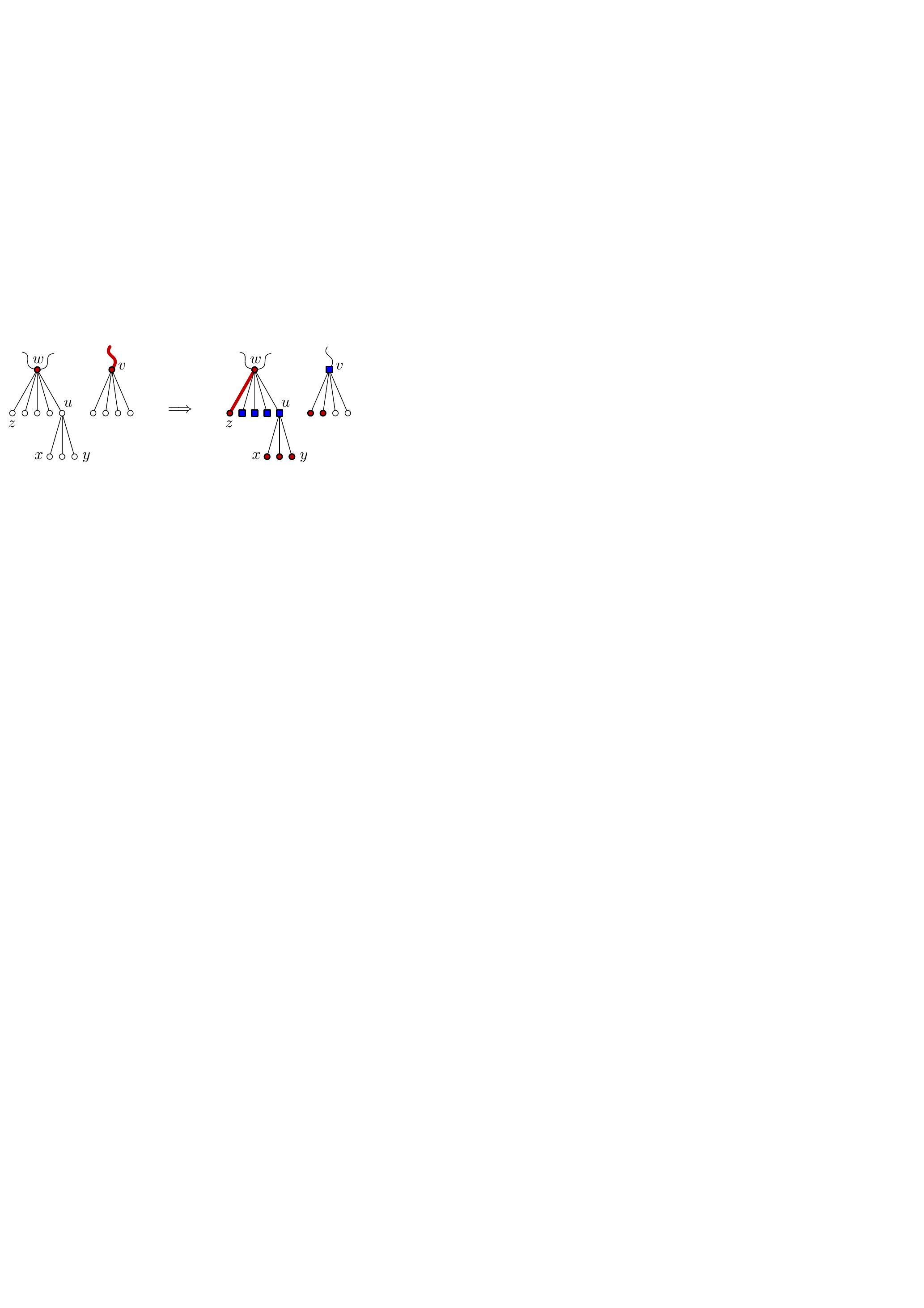}
     \label{fig:case3_2A}
    }
    \hspace{2em}
    \subfigure[Case~3.2.b -- $u \in L(w)$, $|L(w)| > |L(u)| + |L(v)|$:\newline $(A',B') = f(w,u,A \cup x, B \cup u)$\newline(The same is done if $u$ has no colored neighbor.)]{
     \includegraphics{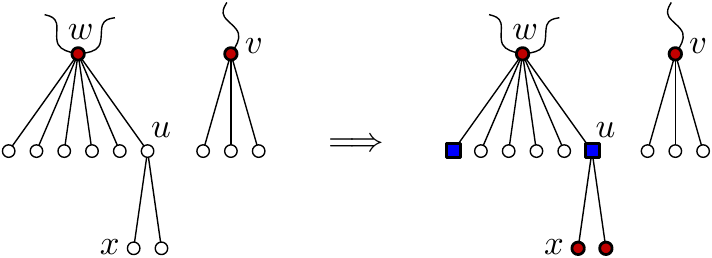}
     \label{fig:case3_2B}
    }
    \hspace{2em}
    \subfigure[Case~3.3.a -- $u \in L(v)$, $|L(u)| \geq |L(v)|$:\newline $(A',B') = f(w,u,f(v,u,A \cup x, B \cup u))$]{
     \includegraphics{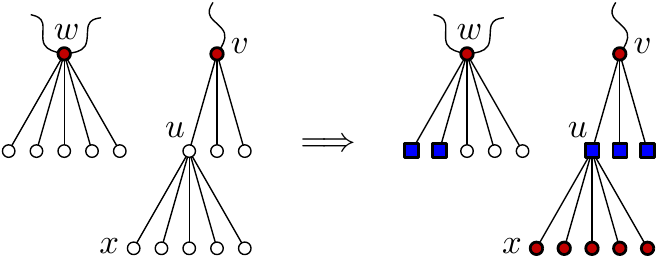}
     \label{fig:case3_3A}
    }
    \hspace{2em}
    \subfigure[Case~3.3.b -- $u \in L(v)$, $|L(u)| < |L(v)|$:\newline $(A',B') = f(w,v,f(u,v,(A \setminus v) \cup \{u,x\},B \cup v))$]{
     \includegraphics{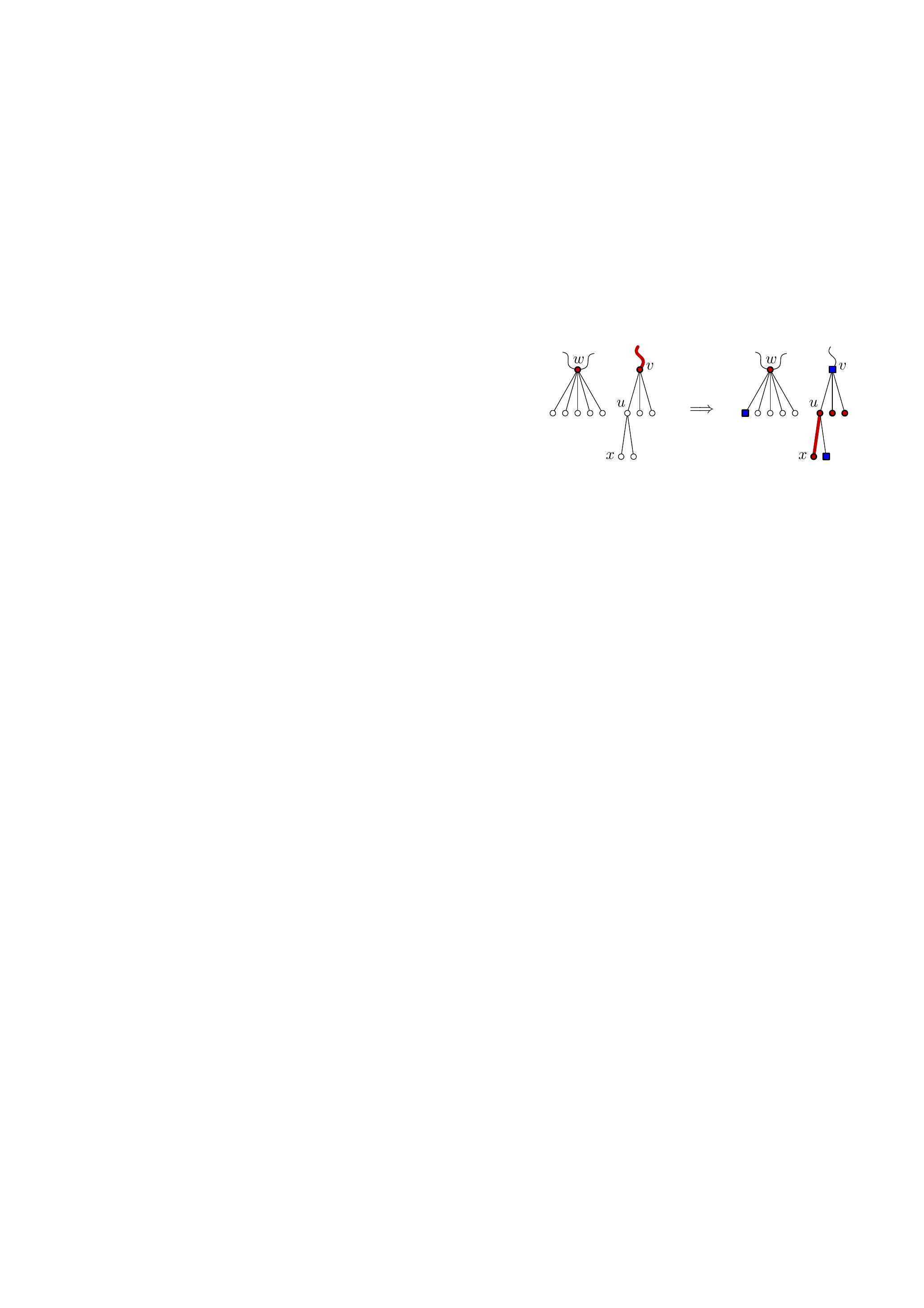}
     \label{fig:case3_3B}
    }
    \hspace{2em}
    \subfigure[Case~3.4 -- $u \in A$, $|L(u)| \geq |L(v)|$:\newline $(A',B') = f(w,u,f(v,u,(A \setminus u) \cup \{x,y\},B \cup u))$]{
     \includegraphics{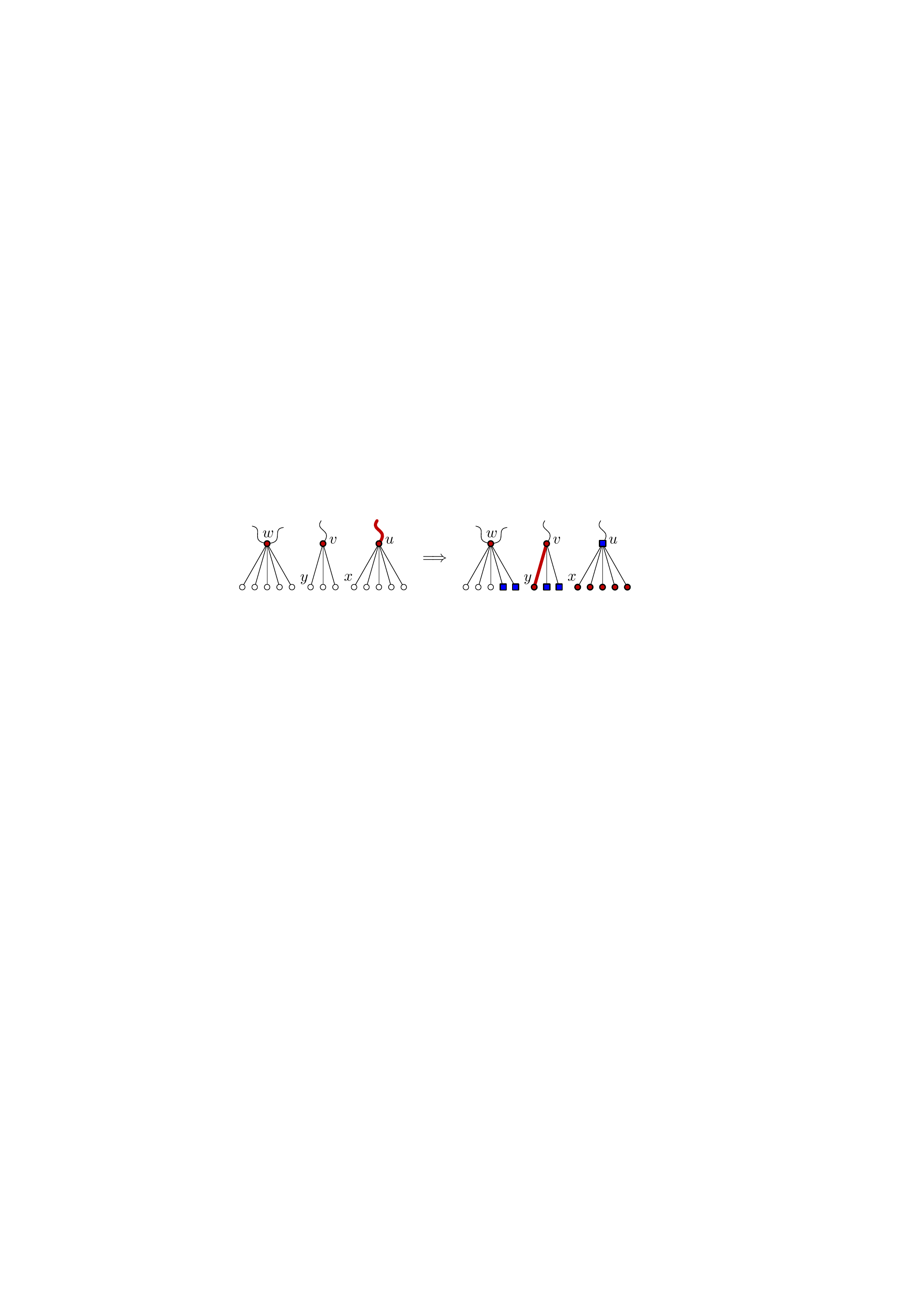}
     \label{fig:case3_4}
    }
   \end{center}
   \caption{Illustration of Case~3 in the proof of Theorem~\ref{thm:good_twins}.}
   \label{fig:case3}
  \end{figure}

  \item \textit{Case 3 -- $|S|=2$:} By the definition of good twins we have $S = \{v,w\} \subseteq A$, $v$ a leaf and $w$ a non-leaf of $G'[A\cup B]$, and $|L(v)| < |L(w)|$. We distinguish, respectively, whether $u\in B$, $u\in L(w)$, $u\in L(v)$ or $u\in A$. We further distinguish subcases depending on the sizes of $L(u)$, $L(v)$ and $L(w)$. The treatment for each case is shown in Figure~\ref{fig:case3}; the description follows. \smallskip
            \begin{itemize}
      \item \textit{Case~3.1.a-- $u \in B$ and $|L(u)| \geq |L(v)|$:} We perform a $(v,u)$-move and afterwards a $(w,u)$-move. Note that if $|L(u)|=|L(v)|$, i.e., $u$ has no uncolored neighbors after the $(v,u)$-move, the second move has no effect. \smallskip
      \item \textit{Case~3.1.b-- $u \in B$ and $|L(u)| < |L(v)|$:} In particular, this means $|L(u)| < |L(w)|$. Then we swap the colors of $u$ and $v$, i.e., remove $u$ from $B$ and add it to $A$ as well as removing $v$ from $A$ and adding it to $B$. Since both $u$ and $v$ are leaves in the colored graph, we obtain twins again. Then apply a $(u,v)$-move followed by a $(w,v)$-move. \smallskip
      \item \textit{Case~3.2.a-- either $u \in L(w)$ or $u$ was  isolated in $G'$ and $|L(w)| \leq |L(u)|+|L(v)|$:} We add $u$ to $B$ and some $x \in L(u)$ to $A$, obtaining twins again. If $|L(w)|<|L(u)|$, then perform a $(w,u)$-move followed by a $(v,u)$-move. If $|L(w)|\geq |L(u)|$, then, recolor $v$ and add some $y \in L(u)$ (if $|L(u)| = 1$ take some $y \in L(v)$ instead) and $z \in L(w)$ to $A$. To the resulting twins we apply a $(w,u)$-move followed by a $(w,v)$-move. \smallskip
      \item \textit{Case~3.2.b-- either $u \in L(w)$ or $u$ was uncolored and isolated in $G'$ and $|L(w)|>|L(u)|+|L(v)|$:} We again add $u$ to $B$ and some $x \in L(u)$ to $A$. To the resulting twins apply a $(w,u)$-move only. Note that in this case, both $v$ and $w$ still have uncolored neighbors. \smallskip
      \item \textit{Case~3.3.a-- $u \in L(v)$ and $|L(v)|\leq |L(u)|$:} We add $u$ to $B$ and some $x \in L(u)$ to $A$, and then perform a $(v,u)$-move followed by a $(w,u)$-move. \smallskip
      \item \textit{Case~3.3.b-- $u \in L(v)$ and $|L(v)| > |L(u)|$:} We recolor $v$ and compensate the imbalanced edge-count and vertex-count by adding $u$ as well as some $x\in L(u)$ to $A$. We perform a $(u,v)$-move followed by a $(w,v)$-move. \smallskip
      \item \textit{Case~3.4-- $u \in A$:}  Assume, without loss of generality, that $|L(u)| \geq |L(v)|$ (otherwise swap the roles of $u$ and $v$). We recolor $u$ and add some $x \in L(u)$ as well as some $y \in L(v)$ to $A$. This produces twins to which we perform a $(v,u)$-move and afterwards a $(w,u)$-move. \smallskip
      \end{itemize}
 \end{itemize}
 It is straightforward to check that our treatment in each of the above cases results in good twins $A',B'$ of $G$. The formal definition of $(A',B')$ in Case~2 and Case~3 is given in Figure~\ref{fig:case2} and Figure~\ref{fig:case3}, respectively.
\end{proof}

Having Theorem~\ref{thm:good_twins}, we can prove Theorem~\ref{thm:forest} quite easily.

\begin{proof}[Proof of Theorem~\ref{thm:forest}]
 Let $G = (V,E)$ be a forest. By Theorem~\ref{thm:good_twins}, $G$ admits good twins $A$, $B$. The vertices in $V \setminus (A\cup B)$, i.e., the uncolored vertices, come in three groups, each of which may be empty. The first group $S_1$ contains all isolated vertices of $G$. The second group $S_2$ consists of some leaves of $G$ that are all adjacent to a leaf $v\in A$ of $G[A\cup B]$. The third group $S_3$ consists of some leaves of $G$ that are all adjacent to a non-leaf $w \in A$ of $G[A\cup B]$. Moreover, if $S_3 \neq \emptyset$, then $|S_3| > |S_2|$.\smallskip

\begin{itemize}
  \item \textit{Case 1 -- $S_2 = S_3 = \emptyset$:} Partition $S_1$ into equal-sized subsets by removing at most one vertex, and add one subset to each of $A$, $B$. \smallskip

  \item \textit{Case 2 -- exactly one of $S_2$, $S_3$ is non-empty:}   We will delete the corresponding vertex $v$ or $w$ from $A$, say it's $w$. I.e., we will uncolor $w$ but color some of its neighbors.

  If $w$ has no neighbor in $A$, then uncolor $w$ and add all but at most one of the remaining uncolored vertices to $A$ and to $B$ evenly so that the resulting sets are twins of size $\lceil n/2\rceil-1$. So, we may assume that $w$ has a neighbor in $A$ (exactly one, by the condition of twins being good).

  It is easy to see that $G$ has a leaf in $B$. For example, contracting all edges in $G[A]$ and in $G[B]$ and removing the uncolored vertices in $S$ gives a forest $\tilde{G}$ whose bipartition classes are induced by $A$ and $B$ and are of equal size. Every leaf of $\tilde{G}$ in $B$ corresponds to at least one leaf of $G$ in $B$ and every component of $\tilde{G}$ with at least as many vertices in $B$ as in $A$ has a leaf in $B$.
  So let $u \in B$ be a leaf of $G$. Then uncolor $w$ and recolor $u$ to be in $A$. Add all but at most one of the remaining uncolored vertices to $A$ and to $B$ evenly so that the resulting sets are twins of size $\lceil n/2\rceil-1$.

   \item \textit{Case 3 -- $S_2 \neq \emptyset$ and $S_3 \neq \emptyset$:} If $w$ has a neighbor in $A$, then add some vertex $x \in S_2$ to $A$. If $w$ has no neighbor in $A$, then and some vertex $y \in S_3$ to $A$. Uncolor $w$. These are twins again.

       Next, add all vertices in $S_2$, except possibly $x$, to $B$, and add the same number of vertices from $S_3$ to $A$. (Note that $|S_3| > |S_2|$ and hence $|S_3 \setminus y| \geq |S_2|$.) In the resulting twins, no uncolored vertex has a colored neighbor and thus we can find twins of $G$ by removing at most one further vertex. In any case, the graph $G$ contains at most 2 uncolored vertices and the colors form twins.
  \end{itemize}
This completes the proof.
\end{proof}

\bibliographystyle{plain}

\end{document}